\documentclass[11pt]{article}

\usepackage{amsmath, amsthm, amssymb, amsfonts}
\usepackage{geometry} 
\usepackage{hyperref}
\usepackage{mathrsfs}
\usepackage{extpfeil} 
\usepackage{enumitem}
\usepackage{titlesec}
\title{Partially Hyperbolic Dynamics on $\mathbb T^4$: Existence of Compact Center-Unstable Leaves }
\author{
Raul Ures, Tongyao Yu \\
SUSTech International Center for Mathematics \\
Department of Mathematics, Southern University of Science and Technology \\
Shenzhen, Guangdong, China
}
\date{\today}

\theoremstyle{plain}
\newtheorem{theorem}{Theorem}[section]
\newtheorem{lemma}[theorem]{Lemma}
\newtheorem{corollary}[theorem]{Corollary}
\newtheorem{proposition}[theorem]{Proposition}

\theoremstyle{definition}
\newtheorem{definition}[theorem]{Definition}

\theoremstyle{remark}
\newtheorem{remark}[theorem]{Remark}

\newcommand{\R}{\mathbb{R}} 
\newcommand{\Z}{\mathbb{Z}} 
\newcommand{\N}{\mathbb{N}} 
\titleformat{\section}{\centering\Large\bfseries}{\thesection.}{0.5em}{}
\begin{document}

\maketitle

\begin{abstract}
We show that for $n\ge2$,  if a partially hyperbolic diffeomorphism $f:\mathbb T^{n+1}\to \mathbb T^{n+1}$ with $\dim E^s=\dim E^c=1$  has an invariant center-unstable foliation with a compact incompressible leaf, then this foliation has a transverse closed curve in the universal cover. Also, if $f$  is leaf conjugate to its linear part, it has no compact incompressible center-unstable submanifold. In particular, by the incompressibility result we obtained on Anosov tori,  the incompressibility assumptions can be removed when $f$ is defined on $\mathbb T^4$.   
\end{abstract}

\section{Introduction}
This article deals with the integrability of invariant distributions in partially hyperbolic systems and attempts to generalize the result in \cite{RodriguezHertz2016}, which works on 3-manifolds, to higher dimensions. 

Firstly we consider the case of hyperbolic systems. A diffeomorphism $f:M\to M$ on a compact manifold $M$ is called Anosov if there is an invariant splitting $TM=E^s\oplus E^u$ such that, for some Riemannian metric $\|\cdot \|$ and for any $p\in M$ and unit vectors $v_s\in E^c_p$ and $v_u\in E^u_p$, we have 

$$
||df_p(v_s)\|<1<\|df_p(v_u)\|.
$$

Moreover, $f$ is called codimension one Anosov if one of $E^s$  and $E^u$  is one-dimensional. 

For an Anosov diffeomorphism $f: M\to M$, in light of Hadamard \cite{Hadamard1901} or Perron’s \cite{Perron1929} method, each of $E^s$ and $E^u$ is uniquely integrable. Partially hyperbolic systems have a splitting of the tangent bundle into three subbundles: stable, center, and unstable. The stable and unstable subbundles are uniformly contracted and expanded, respectively, while the center subbundle has intermediate behavior. By Hirsch, Pugh, and Shub and by Brin and Pesin, partially hyperbolic dynamics is introduced into differential dynamics to study systems that are not uniformly hyperbolic but still exhibit some form of robustness and regularity. For example, partially hyperbolic systems can arise from deformations of hyperbolic systems, skew products, hyperbolic flows and actions, iterated function systems, and attractors. Partially hyperbolic systems often have rich geometric and dynamical properties, such as invariant manifolds, dynamical coherence, robust transitivity, and finiteness of attractors. The precise definition is as follows:
\begin{definition}
    Let $M$ be a closed, connected Riemannian manifold. A diffeomorphism $f:M\to M$ is called partially hyperbolic if $TM$ has a non-trivial $df$-invariant splitting, denoted as $TM=E^s\oplus E^c\oplus E^u$ such that for some Remannian metric $||\cdot ||_{\cdot }$ of $M$, we have $||{df}|_{E^s}||<1,||{df^{-1}}|_{E^u}||<1$ and for any $p\in M$ and unit vectors $v^\sigma \in E^\sigma _p,(\sigma =s,c,u)$, we have $||df_pv^s||_{p}<||df_pv^c||_p<||df_pv^u||_p$. 

Also, we call $f$ a weakly partially hyperbolic diffeomorphism if exactly one of $E^s$ and $E^u$ is trivial.
\end{definition}
Still by Hadamard or Perron’s method, the strong distributions $E^s$ and $E^u$ are uniquely integrable. However, the distributions $E^{cs}:=E^c\oplus E^s, E^{cu}:=E^c\oplus E^u, E^c$ might not be integrable because of the intermediate behavior of the center distribution. It was A. Wilkinson ( see \cite{Wilkinson1998}) who first observed an algebraic Anosov diffeomorphism in a six-dimensional manifold presented in \cite{Smale1967}, where the subbundles are $C^\infty$-smooth, and the center bundle $E^c$ is $4$-dimensional and it does not satisfy the Frobenius condition. 

We say that a partially hyperbolic diffeomorphism $f: M\to M$ is $cs$-dynamically coherent if there is an $f$-invariant foliation tangent to $E^{cs}=E^c\oplus E^s$, and $cu$-dynamical coherence is well-defined analogously. Furthermore, we say that $f$ is dynamically coherent if it is both center-stable and center-unstable dynamically coherent. Then by taking intersections, there must be an $f$-invariant foliation tangent to $E^c$. In \cite{Burns2008}, there is a table illustrating the logical relationships between properties closely related to dynamical coherence.

Dynamical coherence is a key assumption in the theory of stable ergodicity. Also, determining whether or not a system is dynamically coherent is often an important first step towards understanding and classifying its dynamical behavior.

Actually, dynamical incoherence is very common among partially hyperbolic diffeomorphisms. Even in dimension 3, where the center bundle $E^c$ is one-dimensional hence the Frobenius condition is always trivially satisfied, although for decades, it remained an open question whether a partially hyperbolic system with a one-dimensional center was necessarily dynamically coherent, in \cite{RodriguezHertzToAppear} F. Rodriguez Hertz, J. Rodriguez Hertz, and R. Ures gave a non-dynamically coherent example on $\mathbb T^3$, revealing that the lack of Frobenius condition is not the only reason for the non-integrability of the center bundle. The problem comes from the lack of regularity. This discovery refreshed the understanding of dynamical behaviors that a partially hyperbolic system can have and prompted further studies into the role of invariant submanifolds. 

Also in \cite{RodriguezHertzToAppear}, the authors conjectured that any non-dynamically coherent partially hyperbolic diffeomorphism on a 3-manifold admits a periodic torus tangent to either $E^{cu}$ or $E^{sc}$. This conjecture was later confirmed for the 3-torus in \cite{Potrie2015}. Moreover, in \cite{HammerlindlPotrie2019}, the authors extended the result to 3-manifolds with virtually solvable fundamental groups (i.e., those containing a solvable subgroup of finite index). However, in \cite{Bonatti2020}, the authors found a $C^1$-open set of partially hyperbolic diffeomorphisms which are both transitive and dynamically incoherent, as an answer to that conjecture. The work in \cite{Bonatti2020} uses a technique called ‘$h$-transversality’ to build examples on the unit tangent bundles of surfaces. 

There are some sufficient conditions that imply dynamical coherence. We say that a foliation $\mathcal F$ of a manifold $M$ is quasi-isometric if, for the lifted foliation, $\tilde {\mathcal{F}}$ in the universal cover $\tilde M$: 
    \begin{align*}
       \exists a,b>0,s.t, \forall x,y\in L\in \tilde {\mathcal{F}},d_{\tilde {\mathcal{F}}}(x,y)\le ad(x,y)+b.
    \end{align*}
In \cite{Brin2003}, it is shown that on compact Riemannian manifolds, quasi-isometry of strong foliations implies locally unique integrability of the center-stable and center-unstable distributions, hence the system is dynamically coherent.

There is a stronger type of partial hyperbolicity, that is, absolute partial hyperbolicity:
\begin{definition}
    Let $M$ be a closed, connected Riemannian manifold. A diffeomorphism $f:M\to M$ is absolutely partially hyperbolic if it is partially hyperbolic and $\forall p,q,r\in M,v^s\in E^s_p,v^c\in E^c_q,v^u\in E^u_r$, we have $||df_pv^s||_{p}<||df_qv^c||_q<||df_rv^u||_r$.
\end{definition}
In \cite{Brin2009}, M. Brin, D. Burago, and S. Ivanov showed that on $\mathbb T^3$, absolutely partially hyperbolic diffeomorphisms are dynamically coherent, exactly by showing that the strong foliations are quasi-isometric. 

\subsection{Statement of Results}
\subsubsection{In Lower Dimensions}
Let $f:\mathbb T^n\to \mathbb T^n$ be a partially hyperbolic diffeomorphism with $n=3,4$ such that $\dim E^s=\dim E^c=1$, we have the following results:
\begin{theorem}
    \label{MainTheoremDim4}

If there is an $f$-invariant $cu$-foliation $\mathcal W^{cu}$ having a compact leaf $T$,  there exists a closed curve transverse to the lift $\tilde  {\mathcal{W}}^{cu}$. Hence if the leaves of $\tilde  {\mathcal{W}}^{cu}$ are simply connected, $\mathcal W^{cu}$ has no compact leaves.
\end{theorem}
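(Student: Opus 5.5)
The plan is to argue by contradiction through the stable foliation $\mathcal W^s$, which is one-dimensional and transverse to $\mathcal W^{cu}$, following the three-dimensional argument of \cite{RodriguezHertz2016}. Write $A=f_*$ for the linear part.

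\textbf{Step 1: the compact leaf is periodic and its lift is invariant.} Since $\mathcal W^{cu}$ has codimension one, Haefliger's theorem says the union of its compact leaves is compact. This union is $f$-invariant, and each compact leaf is an $(n-1)$-manifold tangent to $E^{cu}$. Using the transverse stable foliation, I would show that the compact leaves come in finitely many isotopy classes. It then follows that $T$ is periodic, say $f^k(T)=T$, and that $f^k|_T$ is an Anosov-type map on a codimension-one torus. In the cases of interest I will assume $T$ is incompressible; for $n=4$ this is the Anosov-torus incompressibility result announced in the abstract, and for $n=3$ it is the classical statement. Then a lift $\tilde T\subset\R^n$ is a properly embedded plane within bounded distance of a rational hyperplane $V$. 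The map $\tilde f^k$, composed with a deck translation, fixes $\tilde T$, so $V$ is $A^k$-invariant. Consequently $A^k$ acts on the one-dimensional quotient $\Z^n/(V\cap\Z^n)\cong\Z$ by $\pm1$. The translates $\tilde T+m$ are therefore indexed by $\Z$, and $\tilde f^k$ sends the translate of index $j$ to the translate of index $\pm j$.

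\textbf{Step 2: stable arcs cannot cross between translates.} Suppose a stable arc $\gamma$ joins $\tilde T$ to $\tilde T+m$ with $m\notin V$. Applying $\tilde f^{kj}$ shrinks the length of $\gamma$ to zero. However, the image still joins two translates whose indices differ by the same nonzero amount, because $A^k$ acts by $\pm1$ on the index. Distinct translates are at uniformly positive distance, since $T$ is compact and embedded. This is a contradiction. Hence every stable leaf meeting $\tilde T$ has closure disjoint from the other translates, and in particular every stable ray starting on $\tilde T$ is trapped in the slab $S$ between $\tilde T$ and the adjacent translate. The slab projects to a compact region $T\times[0,1]$ of $\mathbb T^n$ cut along $T$.

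\textbf{Step 3: producing the closed transversal (main obstacle).} It remains to turn the trapped stable rays, which are transverse to $\tilde{\mathcal W}^{cu}$, into a closed transverse curve in $\R^n$ itself, not only in the quotient slab $S/(V\cap\Z^n)$. The first thing I would try is to show that stable rays are also bounded in the $V$-directions. Let $H$ be the Franks semiconjugacy from the lift of $f$ to $A$, at bounded distance from the identity. The ray $\tilde f^{-kj}(\gamma)$ stays in $S$, and $H$ maps it into the stable subspace of $A$ restricted to $V$. Combining this with the contraction of $\tilde f^{kj}$ along the stable leaf, I would try to show that a long stable arc in $S$ must nearly return to itself in $\R^n$. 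Once such a near-return holds, the standard closing construction for codimension-one foliations finishes the step. One follows the stable arc and, near its return point, joins its end back to its start by a short arc in a foliation chart, chosen transverse to the plaques and crossing them in the same direction. This gives a closed curve in $\R^n$ transverse to $\tilde{\mathcal W}^{cu}$. I expect obtaining this boundedness, that is, the near-return in the universal cover rather than modulo $V\cap\Z^n$, to be the real difficulty.

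\textbf{Step 4: the second sentence.} This follows from Novikov's theorem applied to $\tilde{\mathcal W}^{cu}$ on the simply connected manifold $\R^n$. A closed transversal there would be null-homotopic, which forces some leaf to carry a vanishing cycle. A leaf with a vanishing cycle is not simply connected, contradicting the hypothesis that the leaves of $\tilde{\mathcal W}^{cu}$ are simply connected. Therefore $\mathcal W^{cu}$ has no compact leaf.
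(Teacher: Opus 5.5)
\textbf{Gap in Step 3.} Your Step 3 is the entire content of the theorem, and it is not supplied. You correctly isolate what is needed: a long stable arc that nearly closes up in the universal cover, not merely modulo $V\cap\Z^n$. But you then leave it as an expectation.

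Moreover, the tool you propose for it is not available. There is no Franks semiconjugacy from the lift of $f$ to $A=f_*$ on the ambient torus. By your own Step 1, $A^k$ preserves $V$ and acts by $\pm1$ on the one-dimensional quotient, so $A$ has an eigenvalue of modulus one and is not hyperbolic; in the paper's notation the ambient linear part is $A\times I$. And even with a semiconjugacy onto the hyperbolic part, knowing that a single stable ray is sent into a stable line of the linear map gives no boundedness or recurrence of that ray, because that stable line is itself unbounded. The trapping in Step 2 only gives recurrence in the quotient slab.

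\textbf{The missing idea.} The near-return should come from \emph{two points in the same fiber} of a semiconjugacy, not from one stable ray. In your indexing (ambient torus $\mathbb T^n$), the paper's argument runs as follows.
\begin{enumerate}
\item Cut the torus along $T$ and lift $f^{2k}$ to $\hat f$ on $\mathbb T^{n-1}\times[0,1]$. This map is isotopic to $A_T\times\mathrm{Id}$, where $A_T$ is the hyperbolic automorphism induced by $f^k|_T$.
\item Franks' theorem gives $h:\mathbb T^{n-1}\times[0,1]\to\mathbb T^{n-1}$, homotopic to the projection, with $h\circ\hat f=A_T\circ h$. Its lift $\tilde h$ has fibers of diameter $<C$, and these fibers are permuted by the lift $\hat F$.
\item On the boundary torus $T_0$, $h$ agrees with the semiconjugacy of the codimension-one partially hyperbolic map $\hat f|_{T_0}$, whose fibers are center arcs. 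So you can take a periodic $p\in T_0$ whose fiber $\gamma_p$ is a center arc of length $<\epsilon$ (Proposition~\ref{periodicpointwithsmallfiber}).
\item Proposition~\ref{fiberclosetoT0} shows that near $T_0$ the fiber $h^{-1}(h(p))$ lies in $\mathcal W^{cs}_{\mathrm{loc}}(p)$ and meets every center arc of $\mathcal W^{cs}_{\mathrm{loc}}(p)$ close to a center arc containing $\gamma_p$. Hence for every $N$ there are points $x_1,\dots,x_N\in\tilde h^{-1}(\tilde h(\tilde p))$ on distinct center curves of $\tilde{\mathcal W}^{cs}_{\mathrm{loc}}(\tilde p)$.
\item For every $j$, the points $\hat F^{j}(x_1),\dots,\hat F^{j}(x_N)$ lie in a set of diameter $<C$. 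Take $N$ larger than the maximal number of $\epsilon$-separated points in such a set. This gives $i\ne l$ and $n_r\to-\infty$ (multiples of the period of $\tilde p$) with $d(\hat F^{n_r}(x_i),\hat F^{n_r}(x_l))<\epsilon$. Meanwhile, the stable part of the center-then-stable path from $x_i$ to $x_l$ becomes arbitrarily long.
\item The center part lies in a $cu$-leaf, so closing this long stable arc inside an $\epsilon$-foliation chart gives the closed transversal. Haefliger's argument (Theorem~\ref{Haefliger}) then gives the second sentence.
\end{enumerate}
Your Step 2 is correct but not needed.

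\textbf{Smaller points.} First, finiteness of isotopy classes of compact leaves does not give $f^k(T)=T$. The paper uses Theorem~\ref{AndyCptCuSubmanifolds} for periodicity, and Theorem~\ref{ZXCodimension1PHD} to know that $T$ is a torus carrying codimension-one Anosov-type dynamics. Second, for $n=4$, Theorem~\ref{IrreAnosovTori} alone only gives a dichotomy. Ruling out the compressible case also needs the Euler characteristic computation ($\chi=4$ and $\chi=-2$ for the two complementary pieces), set against Poincar\'e--Hopf for an outward-pointing field approximating $E^s$ on the bounded piece (Theorem~\ref{cptCUsubmanifoldincom}).
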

\begin{remark}
    Note that when $n=3$, it is the main result in \cite{RodriguezHertz2016}
\end{remark}
\begin{theorem}
    \label{SecondTheoremCUDim4}

If $f$ is leaf conjugate to its linear part with respect to a center foliation $\mathcal W^c$ and suppose that there is a compact $cu$-submanifold $T$ of $f$. Then for any $x\in T$, $\mathcal W^c(x)$ is not contained in $T$. 
\end{theorem}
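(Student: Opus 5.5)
We argue by contradiction: assume some $x\in T$ has $\mathcal W^c(x)\subset T$. The plan is to show that the compact $cu$-submanifold $T$ must then be saturated by center leaves and carry a linear, $A$-invariant structure. This is incompatible with the hyperbolicity of the linear part $A$ of $f$ in the center direction.

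\textbf{Step 1 (saturation).} Since $T$ is tangent to $E^{cu}=E^c\oplus E^u$, it is also saturated by strong unstable leaves: $\mathcal W^u(y)\subset T$ for all $y\in T$. The set $S=\{y\in T:\mathcal W^c(y)\subset T\}$ is closed, because center leaves vary continuously. By hypothesis $S$ is nonempty. We would show $S$ is $u$-saturated by holonomy: moving along an unstable arc inside $T$ maps center plaques to center plaques inside $T$. We may replace $T$ by $f^k(T)$ and pass to a periodic compact $cu$-submanifold, or work with the $f$-orbit closure, which is compact when $T$ is periodic as in the standard setting. Together with minimality-type arguments for $\mathcal W^u$ inside $T$, this gives $S=T$. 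Then $T$ is foliated by center leaves and unstable leaves, i.e.\ it is $cu$-saturated.

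\textbf{Step 2 (transfer via leaf conjugacy).} Let $h$ be the leaf conjugacy, sending $\mathcal W^c$ to the linear center foliation $\mathcal L^c$ of $A$, with $h(f(\mathcal W^c(x)))=A(h(\mathcal W^c(x)))$. Then $h(T)$ is a compact $A$-periodic set saturated by $\mathcal L^c$-leaves, which are the lines parallel to $E^c_A$. By Theorem~\ref{MainTheoremDim4}, or directly from incompressibility, $T$ lifts to a plane-like object at bounded distance from an affine subspace $V$, with $\pi_1(T)$ injecting. We then identify $V$ as a rational $A$-invariant subspace containing $E^c_A$ and contained in $E^c_A\oplus E^u_A$. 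Such a $V$ is a rational $A$-invariant subspace on which $A$ is partially expanding. Its complement contains $E^s_A$ and has rational dimension $1$.

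\textbf{Step 3 (contradiction).} A rational $A$-invariant subspace of codimension one forces an integer eigenvalue for the induced action on the quotient lattice $\Z^{n+1}/(V\cap\Z^{n+1})$. That eigenvalue must be $\pm1$, since $\det A=\pm1$ and the eigenvalues on $V$ are algebraic integers whose product is also $\pm1$. But the quotient eigenvalue is the stable eigenvalue of $A$, which has modulus less than $1$. This contradicts $\dim E^s=1$ and the dominated splitting. It also rules out rational invariant subspaces containing $E^c_A$ when $A$ restricted to the center is irreducibly coupled to $E^s$.

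\textbf{Main obstacle.} I expect Step 1 to be the hardest part, namely propagating the inclusion of one center leaf to all of $T$. Center leaves may be noncompact and wind densely, so local $u$-holonomy might push a center leaf out of $T$. I would first try to use that $h$ is a homeomorphism that conjugates leaves. Then $h(\mathcal W^c(x))\subset h(T)$ is a linear line, and its closure in $\mathbb T^{n+1}$ is a subtorus $\overline{L}$ contained in the compact set $h(T)$. That subtorus is already rational and invariant under some power of $A$, so the argument can pass to Step 3 directly, with $\overline{L}$ in place of $V$.
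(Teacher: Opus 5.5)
There is a genuine gap in Steps 2--3: the linear algebra of the linear part $B$ of $f$ is backwards. Once $T$ is an invariant incompressible torus, $\pi_1(T)$ spans a rational $B$-invariant hyperplane $V$. Moreover, $B|_V$ is the matrix $A$ of the codimension-one partially hyperbolic map $f|_T$, which is hyperbolic with one contracting and $n-1$ expanding eigenvalues. Your own Step 3 computes that $B$ acts on $\mathbb Z^{n+1}/(V\cap\mathbb Z^{n+1})$ by $\pm1$. An eigenvalue $\pm1$ is neither contracting nor expanding, so it is the \emph{center} eigenvalue of $B$, with a rational eigenvector $w\notin V$. Hence $V=E^s_B\oplus E^u_B$, the linear center leaves are closed circles in direction $w$ (this is what $B=A\times I$ expresses in the paper), and $B$ is not hyperbolic in the center direction.

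Your identification $V\subset E^c_B\oplus E^u_B$, with the quotient eigenvalue equal to the stable one, is therefore never true in this setting. The contradiction in Step 3 comes entirely from it, not from the hypothesis $\mathcal W^c(x)\subset T$. The same reasoning would exclude every invariant incompressible $cu$-torus, for instance the one in the Rodriguez Hertz--Rodriguez Hertz--Ures example on $\mathbb T^3$, whose linear part is $A\times\mathrm{Id}$. Your fallback fails for the same reason: the closure of $h(\mathcal W^c(x))$ is a single rational circle, which is perfectly compatible with $B$, and a codimension-one argument says nothing about it. Step 1 is unsupported, but it is not needed anyway.

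The missing idea is to play the compactness of center leaves against the dynamics of $f|_T$. Leaf conjugacy to $B$ makes every center leaf of $f$ a compact circle. Suppose $\mathcal L=\mathcal W^c(x)\subset T$. By Theorem~\ref{ZXCodimension1PHD}, center curves in $T$ lie in leaves of the center foliation of $f|_T$, so $\mathcal L$ is the center leaf of $f|_T$ through $x$. By Lemma~\ref{hgfibersareC}, its lift $\tilde{\mathcal L}\subset\tilde T\cong\mathbb R^n$ is mapped by the Franks semiconjugacy $H$ onto a whole stable line of $A$, which has irrational direction. But $\tilde{\mathcal L}$ is either bounded or stays near a rational line, and $H$ is at bounded distance from the identity, a contradiction. (If the leaf conjugacy is homotopic to the identity, you can also argue homologically: $[\mathcal L]$ would lie in $V$ yet be a nonzero multiple of $w\notin V$.)

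Finally, for the $\mathbb T^4$ statement you must also establish that $T$ is incompressible. The paper gets this from Theorem~\ref{cptCUsubmanifoldincom}, which rests on Theorem~\ref{IrreAnosovTori}; Theorem~\ref{MainTheoremDim4}, which you invoke, does not give it.
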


\begin{corollary}
    \label{Coro2SecondTheoremCUDim4}

If $f$ is leaf conjugate to its linear part with respect to a center foliation $\mathcal W^c$, then $f$ has no locally uniquely integrable $cu$-submanifolds.
\end{corollary}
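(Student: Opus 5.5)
The natural route is to reduce Corollary \ref{Coro2SecondTheoremCUDim4} to Theorem \ref{SecondTheoremCUDim4} by showing that a locally uniquely integrable compact $cu$-submanifold must contain the center leaves through its points. Suppose, for contradiction, that $T$ is a compact $cu$-submanifold of $f$, that is, a compact ($f$-periodic) submanifold tangent to $E^{cu}=E^c\oplus E^u$, and that $E^{cu}$ is locally uniquely integrable at the points of $T$. By this I mean that any $C^1$ submanifold tangent to $E^{cu}$ and passing through $x\in T$ coincides with $T$ near $x$, and likewise any $C^1$ curve tangent to $E^c$ through $x$ lies in $T$ locally. Fix $x\in T$ and consider the center leaf $\mathcal W^c(x)$ of the foliation with respect to which $f$ is leaf conjugate to its linear part. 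The aim is to prove $\mathcal W^c(x)\subset T$, which contradicts Theorem \ref{SecondTheoremCUDim4}.

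First I show that a small center plaque $\mathcal W^c_{loc}(x)$ lies in $T$. Since $T$ is tangent to $E^{cu}\supset E^c$, the restriction $E^c|_T$ is a continuous line field on $T$. Peano's theorem then gives a $C^1$ curve $\gamma\subset T$ through $x$ that is tangent to $E^c$. The plaque $\mathcal W^c_{loc}(x)$ is another $C^1$ curve through $x$ tangent to $E^c$. Local unique integrability of the relevant bundle along $T$ forces these two curves to agree near $x$, so $\mathcal W^c_{loc}(x)\subset T$.

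If the hypothesis is only local unique integrability of $E^{cu}$, I would argue instead as follows. Saturate the center plaque $\mathcal W^c_{loc}(x)$ by local unstable manifolds, giving $\bigcup_{y\in\mathcal W^c_{loc}(x)}\mathcal W^u_{loc}(y)$. This set is a $C^1$ surface tangent to $E^{cu}$; its $C^1$ regularity is the step I need to verify, and I would appeal to the standard fact that $u$-saturations of center curves are tangent to $E^{cu}$. Unique integrability then forces this surface to coincide with $T$ near $x$, and in particular $\mathcal W^c_{loc}(x)\subset T$.

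Next I globalize. Let $A=\mathcal W^c(x)\cap T$. The previous step, applied at every point of $A$, shows that $A$ is open in the leaf $\mathcal W^c(x)$ with its leaf topology. Since $T$ is compact, $A$ is closed. The leaf is connected, so $\mathcal W^c(x)\subset T$, which contradicts Theorem \ref{SecondTheoremCUDim4}. I expect the main obstacle to be the local step: unique integrability of the two-dimensional bundle $E^{cu}$ does not by itself control one-dimensional center curves. That is why the $u$-saturation argument, and the regularity of the resulting surface, is the delicate point.
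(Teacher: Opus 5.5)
Your proposal is correct and is essentially the argument the paper intends: the corollary is stated without a separate proof, as an immediate consequence of Theorem \ref{SecondTheoremCUDim4}, because local unique integrability along a compact $cu$-submanifold $T$ forces $\mathcal W^c(x)\subset T$ for $x\in T$, which that theorem forbids. You are right to take $T$ compact, as the theorem requires; and under the usual curve-based notion of unique integrability (every $C^1$ curve tangent to $E^{cu}$ stays in the local integral manifold), the center plaque lies in $T$ directly, so the Peano step and the $u$-saturation detour are not needed.
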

The results above are actually consequences of the following more general results that hold in higher dimensions:
\subsubsection{In Higher Dimensions}
For $n\ge 2$, let $f:\mathbb T^{n+1}\to \mathbb T^{n+1}$ be a partially hyperbolic diffeomorphism such that $\dim E^s=\dim E^c=1$.
\begin{theorem}
    \label{MainTheorem}
If there is an $f$-invariant $cu$-foliation $\mathcal W^{cu}$ having a compact incompressible leaf $T$, there exists a closed curve transverse to the lift $\tilde  {\mathcal{W}}^{cu}$. Hence if leaves of $\tilde  {\mathcal{W}}^{cu}$ are simply connected, $\mathcal W^{cu}$ has no compact incompressible leaves.
\begin{theorem}
    \label{SecondTheoremCU}
If $f$ is leaf conjugate to its linear part with respect to a center foliation $\mathcal W^c$ and suppose that there is a compact incompressible $cu$-submanifold $T$ of $f$. Then for any $x\in T$, $\mathcal W^c(x)$ is not contained in $T$.

\end{theorem}

\end{theorem}
\begin{remark}
    Corollary \ref{Coro2SecondTheoremCUDim4} still holds for incompressible $cu$-submanifolds in the general setting in higher dimensions. 
\end{remark}

\begin{definition}
    
Let $T$ be a compact embedded submanifold of $\mathbb T^{n+1}$, we say that $T$ is an Anosov Torus if there is a diffeomorphism $f:M\to M$ such that $f(T)=T$ and $(f|_T)_*:\pi_1(T)\to \pi_1(T)$ induces a hyperbolic integer matrix.

\end{definition}

The following theorem on the incompressibility of Anosov tori lays the geometric foundation for obtaining the results in dimension $4$ from the results above:
\begin{theorem}
    \label{IrreAnosovTori}
Let $T$ be an irreducible Anosov $n$-tori, in particular, codimension one Anosov $n$-tori, in a torsion-free Riemannian manifold $M$ covered by $\mathbb R^{n+1}$. One of the following holds:
\begin{enumerate}[label=(\alph*)]
    \item Either $T$ is incompressible, or
    \item for $n\ge3$, $T$ is totally compressible and can be embedded into $\mathbb S^{n+1}$ by taking a lift in $\mathbb R^{n+1}$. Moreover, denote $\tilde T$ as such an embedding in $\mathbb S^{n+1}$, then $\tilde T$ splits $\mathbb S^{n+1}$ into two components, $N_1$ and $N_2$, and one of them, say, $N_1$ is simply connected and $N_2$ has an incompressible boundary, also, $H_1(N_2)\simeq H_1(\partial N_2)$. Furthermore, let $G:= [\pi_1(N_2),\pi_1(N_2)]\rtimes \pi_1(\partial N_2)$, where $\pi_1(\partial N_2)$ acts on $[\pi_1(N_2),\pi_1(N_2)]$ by conjugation (  $\pi_1(\partial N_2)$ is identified with its image under the inclusion map ), then $G\simeq \pi_1(N_2)$ and $\pi_1(\partial N_2)$ normally generates $G$.
\end{enumerate}
\end{theorem}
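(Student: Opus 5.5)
We argue from the kernel $K=\ker(i_*:\pi_1(T)\to\pi_1(M))$ of the map induced by the inclusion $i:T\hookrightarrow M$, where $\pi_1(T)\cong\mathbb Z^n$. The diffeomorphism $f$ preserves $T$, so $f_*$ commutes with $i_*$ up to the choice of basepoint path, and $K$ is invariant under $A=(f|_T)_*\in GL_n(\mathbb Z)$. Irreducibility of $T$ will mean that $A$ has irreducible characteristic polynomial over $\mathbb Q$; for a codimension one Anosov torus this follows from the standard fact that codimension one Anosov automorphisms of tori have irreducible characteristic polynomial. Then $K\otimes\mathbb Q$ is an $A$-invariant rational subspace, so it is either $0$ or all of $\mathbb Q^n$.

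\emph{Case $K\otimes\mathbb Q=0$.} Here $K$ is a subgroup of a free abelian group of rank $0$, so $K=0$ and $T$ is incompressible. This is alternative (a).

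\emph{Case $K\otimes\mathbb Q=\mathbb Q^n$.} The image $i_*(\pi_1 T)$ is then finite. Since $M$ is covered by $\mathbb R^{n+1}$, it is aspherical of finite dimension, so $\pi_1(M)$ is torsion free and the image is trivial. Thus $T$ is totally compressible and lifts homeomorphically to $\tilde T\subset\mathbb R^{n+1}\subset\mathbb S^{n+1}$. This is the point where the torsion-free hypothesis is used.

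For $n=2$ I would rule this case out, or fold it into (a): a torus in $\mathbb R^3$ bounds a solid torus side on which the curves are compressible, so some primitive class dies, contradicting total compressibility. If total compressibility is allowed there, it is simply excluded from the $n\ge3$ statement. For $n\ge3$, Alexander duality shows that $\tilde T$ separates $\mathbb S^{n+1}$ into $N_1,N_2$ with $\tilde H_*(N_1)\oplus\tilde H_*(N_2)\cong H^{n-*}(T)$ shifted appropriately. The Mayer–Vietoris sequence then splits $H_1(T)=\mathbb Z^n$ as $H_1(N_1)\oplus H_1(N_2)$.

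The key step is to push the Anosov dynamics onto this picture. Anosov tori in the paper's sense come with an ambient $f$, and the class of the lift in $\mathbb S^{n+1}$ is only determined up to the choice of lift. I would instead work with the intrinsic kernels $K_j=\ker(\pi_1T\to\pi_1N_j)$ and their abelianized images. The complementary decomposition $H_1(T)=H_1(N_1)\oplus H_1(N_2)$ gives $A$-invariant rational subspaces only if $f$ extends to a neighbourhood of $\tilde T$ preserving sides. I would try to arrange this by lifting $f$ to $\mathbb R^{n+1}$, fixing $\tilde T$ after composing with a deck transformation (using $f(T)=T$), and noting that the compact side and the unbounded side are preserved. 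Irreducibility then forces one summand to vanish, say $H_1(N_1)=0$, so $H_1(N_2)\cong H_1(\partial N_2)$.

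Next I would upgrade this to simple connectivity of $N_1$ and incompressibility of $\partial N_2$. I would apply the loop theorem or Dehn's lemma type arguments to the kernels, which is unavailable in high dimension. So instead I would use van Kampen: $\pi_1(\mathbb S^{n+1})=1=\pi_1N_1*_{\pi_1T}\pi_1N_2$, together with invariance of the kernels under the induced automorphism and the rationality argument applied to their abelian images in $\mathbb Z^n$. This is the step I expect to be the main obstacle. Finally, for $G$, the van Kampen triviality with $\pi_1N_1=1$ says that $\pi_1(\partial N_2)$ normally generates $\pi_1 N_2$. Combined with $H_1(N_2)\cong\mathbb Z^n$ via the boundary, the map $\pi_1(\partial N_2)\to\pi_1(N_2)^{ab}$ is an isomorphism. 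This splits the abelianization sequence $1\to[\pi_1N_2,\pi_1N_2]\to\pi_1N_2\to\mathbb Z^n\to1$ with section given by $\pi_1(\partial N_2)$, which yields $\pi_1(N_2)\cong G$.
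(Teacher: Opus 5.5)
\textbf{The gap: you do not prove that $N_1$ is simply connected.} Everything else in your outline works, and in places it is cleaner than the paper.

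\emph{What works.} The dichotomy $K\otimes\mathbb Q\in\{0,\mathbb Q^n\}$ replaces the cyclic-vector argument of the paper's Claim 1. Your Mayer--Vietoris splitting $H_1(\tilde T)\cong H_1(N_1)\oplus H_1(N_2)$ uses $A$-invariance of both kernels (sides are preserved, since the extension of $\tilde f$ to $\mathbb S^{n+1}$ fixes $\infty$). Since each $H_1(N_r)$ is a direct summand of $\mathbb Z^n$, this gives $H_1(N_1)=0$ and $H_1(\partial N_2)\cong H_1(N_2)$, without the paper's Claims 3 and 4. It also already gives incompressibility of $\partial N_2$: the composite $\pi_1(\partial N_2)\to\pi_1(N_2)\to H_1(N_2)$ is an isomorphism. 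Your split abelianization sequence is the paper's Claim 5 in compact form. It needs only that the boundary normally generates $\pi_1(N_2)$, which van Kampen gives without knowing $\pi_1(N_1)$.

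\emph{Why your proposed tools fall short.} You leave $\pi_1(N_1)=1$ as the ``main obstacle'', and the tools you name cannot supply it. $A$-invariance and rationality determine $G_1=\ker(\pi_1\tilde T\to\pi_1 N_1)$ only up to finite index. Abelian images see $\pi_1(N_1)$ only through $H_1(N_1)=0$, so they show $\pi_1(N_1)$ is perfect and nothing more.

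\emph{The paper's route.} The paper aims for $G_1=\pi_1(\tilde T)$ outright. Then the van Kampen relations become $(i_V)_*w=1$, and the amalgam is the free product $\pi_1(N_1)*\bigl(\pi_1(N_2)/\langle\langle\pi_1\tilde T\rangle\rangle\bigr)$. Its triviality forces $\pi_1(N_1)=1$. Going from finite index to all of $\pi_1(\tilde T)$ is not automatic, though. The torsion trick of Claim 1 would need $\pi_1(N_1)$ to be torsion-free, which nothing guarantees.

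\emph{How to close the gap with what you already have.} Let $Q=\pi_1(\tilde T)/G_1$ and $K'=\pi_1(N_2)/\langle\langle G_1\rangle\rangle$. Then $1=\pi_1(N_1)*_{\pi_1\tilde T}\pi_1(N_2)\cong\pi_1(N_1)*_Q K'$. The group $Q$ injects into $\pi_1(N_1)$ by definition of $G_1$. It injects into $K'$ because your boundary isomorphism identifies $(K')^{\mathrm{ab}}$ with $\pi_1(\tilde T)/G_1=Q$. By the normal form theorem for amalgamated products, $\pi_1(N_1)$ embeds in the trivial group, so $\pi_1(N_1)=1$.

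\emph{Minor point on $n=2$.} Your remark there is garbled: a class dying does not contradict total compressibility. The contradiction comes from your own splitting plus Alexander duality $\tilde H_1(N_1)\cong H^1(N_2)$ in $\mathbb S^3$. This forces both sides to have rank one, which contradicts irreducibility.
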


\subsection{Organization of the Paper}
The paper is organized as follows: In Section \ref{CodimensionOnePHD} we introduce codimension one partially hyperbolic systems and verify several lemmas with respect to Frank’s semiconjugacy as higher-dimensional analogues of those in \cite{RodriguezHertz2016}. Section \ref{CUsub} is about center-unstable submanifolds and a corollary of Theorem \ref{IrreAnosovTori} will be proved in this section, which shows that Theorem \ref{MainTheoremDim4} and \ref{SecondTheoremCUDim4} follow from Theorem \ref{MainTheorem} and  \ref{SecondTheoremCU}, respectively. Theorem \ref{SecondTheoremCU} will also be proved in Section \ref{CUsub}. Then we give the proof of Theorem \ref{MainTheorem} in Section \ref{NeoCLv}. Finally, as a relatively independent result, we place the proof of Theorem \ref{IrreAnosovTori} in the last section. 
\newpage
\section{Codimension One Partial Hyperbolicity}
\label{CodimensionOnePHD}

There is a family of diffeomorphisms that are of both a specialization of weakly partially hyperbolicity and a generalization of codimension one hyperbolicity: 

Let $M$ be a closed Riemannian manifold. A $C^1$-diffeomorphism $f: M \to M$ is called a codimension one partially hyperbolic diffeomorphism, if $f$ or $f^{-1}$ admits a continuous $T f$-invariant splitting $TM = E^c \oplus E^u$ and $E^c$ is one-dimensional, and a function $\xi : M \to (1,+\infty)$ such that$\|T f(v^c)\| < \xi(x)< \|T f(v^u)\|$ for all $x \in M$ and unit vectors $v^c \in E^c_x$ and $v^u \in E^u_x$.

By the results in \cite{Newhouse1970}, tori are the only type of closed and connected manifold supporting a codimension one Anosov diffeomorphism, and all codimension one Anosov diffeomorphisms are topologically conjugate to a toral automorphism. Furthermore, in \cite{Zhang2023}, a generalized result on codimension one partial hyperbolicity is obtained:
\begin{theorem}( Theorem 1.2 in \cite{Zhang2023})
    \label{ZXCodimension1PHD}
    Let $f: M\to M$ be a codimension one partially hyperbolic diffeomorphism on a closed, connected manifold $M$, with a splitting $TM=E^c\oplus E^u$. Then:
    \begin{itemize}
        \item  $N$ is homeomorphic to $\mathbb T^n$
        \item There is a foliation ${\mathcal W}^{c}$ tangent to $E^c$ such that any piecewise $C^1$-curve tangent to $E^c$ must lie in a unique leaf of ${\mathcal W}^{c}$
        \item  $f$ is semiconjugated to its linear part $f_*$, which is codimension one Anosov.
    \end{itemize}
\end{theorem}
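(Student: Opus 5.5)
Since this result is quoted from \cite{Zhang2023}, I only outline how I would reprove it. I work with the case where $f$ itself carries the splitting $TM=E^c\oplus E^u$; for $f^{-1}$ the argument is symmetric.

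\textbf{Step 1: the unstable foliation.} $E^u$ is uniformly expanded and has codimension one. By the Hadamard--Perron method it integrates uniquely to an $f$-invariant codimension one foliation $\mathcal W^u$. Each leaf is an increasing union of iterates of unstable discs, so each leaf is homeomorphic to $\mathbb R^{n-1}$. In particular there are no compact leaves, no Reeb components, and no holonomy.

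\textbf{Step 2: identify $M$ with $\mathbb T^n$.} By Palmeira's theorem, the universal cover $\tilde M$ is $\mathbb R^n$, so $M$ is aspherical. The lifted leaves are properly embedded hyperplanes, and the leaf space $\mathcal L=\tilde M/\tilde{\mathcal W}^u$ is a simply connected, possibly non-Hausdorff, $1$-manifold.

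The key claim is that $\mathcal L\cong\mathbb R$. I would prove it using the one-dimensional center, which is transverse to $\mathcal W^u$. Suppose two leaves cannot be separated. Iterating backwards contracts the unstable direction less strongly than domination allows, and this contradicts the uniform size of local product boxes. This is the step I expect to be the main obstacle, and I would model it on the branching-leaf arguments in \cite{Potrie2015}.

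Once $\mathcal L\cong\mathbb R$, the foliation has no holonomy, so $\pi_1(M)$ acts freely on $\mathbb R$. By H\"older's theorem, $\pi_1(M)$ is then abelian and embeds in $\mathbb R$. Being torsion-free and the fundamental group of a closed aspherical $n$-manifold, it is $\mathbb Z^n$. Topological rigidity for tori (Farrell--Hsiang) then gives that $M$ is homeomorphic to $\mathbb T^n$.

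\textbf{Step 3: the center foliation.} Lift a $C^1$ center curve to $\tilde M$. It is transverse to $\tilde{\mathcal W}^u$, so it projects injectively into $\mathcal L\cong\mathbb R$. If two center curves from the same point branch, I would apply $f^{-k}$. The unstable distance between the branches would then be relatively expanded compared with the center length, which contradicts domination. This gives unique integrability along piecewise $C^1$ curves, and hence the foliation $\mathcal W^c$.

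\textbf{Step 4: semiconjugacy to the linear part.} Let $H:\mathbb Z^n\to\mathbb R$ be the H\"older embedding, which is a linear functional. $f$ conjugates the action of $\pi_1(M)$ on $\mathcal L$ to itself, so $H\circ f_*=\lambda H$ for some $\lambda>0$. Hence $\ker H$ is an $f_*$-invariant hyperplane.

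Next I compare growth rates. Volume growth of unstable discs shows that $f_*$ expands $\ker H$ uniformly. The one-dimensional center grows slower, which forces $|\lambda|<1$ along the complementary eigendirection. So $f_*$ is hyperbolic, and it is codimension one Anosov.

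Franks' theorem then gives a semiconjugacy $h$ with $h\circ f=f_*\circ h$, homotopic to the identity. The delicate part of this step is ruling out eigenvalues of modulus $1$ on $\ker H$. I would try to exclude them by quasi-isometry of the lifted unstable leaves, which holds because leaf-space distance controls ambient distance.
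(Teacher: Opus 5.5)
\textbf{Verdict.} The proposal has genuine gaps in Steps 3 and 4. The paper does not reprove this theorem; it imports it from Zhang's paper. Along with it, the paper uses specific ingredients from that work: lifted unstable leaves lie at bounded distance from translates of a hyperplane $W$; $\tilde{\mathcal W}^u$ is quasi-isometric; and $W$ is the expanding eigenspace of $f_*$ (Propositions 2.18, 2.22 and 4.3 there). These ingredients are exactly what your sketch is missing.

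\textbf{Step 2 is repairable.} Replace your vague dynamical argument for $\mathcal L\cong\R$ by the classical fact that a codimension-one foliation without holonomy has Hausdorff leaf space in the universal cover. One way is the global product structure with a smooth line field close to $E^c$, as in Lemma \ref{CUGPS}. Also, Farrell--Hsiang only covers $n\ge 5$. For $n=3,4$, which are the cases this paper needs, you must cite Waldhausen and Freedman--Quinn respectively.

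\textbf{Step 3 (unique integrability) fails as written.} Domination cannot rule out branching. Take the triangle formed by two center branches from $\tilde x$ and an unstable arc joining them. Under $f^{-k}$ the unstable side shrinks \emph{faster} than the center sides, and under $f^{k}$ it grows faster. That is exactly what domination predicts, so no contradiction appears. Moreover, domination is only pointwise, so it cannot compare arcs that lie in different regions after many iterates. You need a global input, and the following route works.
(i) Use the volume-versus-length argument of Lemma \ref{hgmapsCtoS}. A lifted center arc $\tilde\gamma$ meets each leaf of $\tilde{\mathcal W}^u$ at most once, by your monotone projection to $\mathcal L$. Hence the $\delta$-plaques through $\tilde f^k(\tilde\gamma)$ are pairwise disjoint, with total volume at least $C\,\text{length}(\tilde f^k\tilde\gamma)$. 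They lie in $\tilde f^k(\tilde{\mathcal W}^u_\delta(\tilde\gamma))$, which projects injectively to $M$ and so has volume at most $\text{Vol}(M)$. Therefore forward iterates of short center arcs have uniformly bounded length.
(ii) Use quasi-isometry of $\tilde{\mathcal W}^u$. Suppose two branches cross a leaf at $y_1\ne y_2$. Then $\tilde f^k y_1$ and $\tilde f^k y_2$ are joined through $\tilde f^k\tilde x$ by two center arcs of bounded length, so they stay at bounded ambient distance. Their leaf distance, however, grows at least like $\lambda_u^k$, which contradicts quasi-isometry.

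\textbf{Step 4 (hyperbolicity of $f_*$) has the same kind of gap.}
A volume estimate for unstable discs does not control $f_*$, because lifted leaves can be folded. The volume of $\tilde f^k(D)$ therefore says nothing about its projection to $\ker H$. Even if it did, it would only bound $|\det(f_*|_{\ker H})|$. That does not exclude eigenvalues of modulus $<1$ in $\ker H$, which are as much an obstruction as modulus one.

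Likewise, ``the center grows slower'' does not give $|\lambda|<1$, since the center may expand. Use $|\det f_*|=1$ instead: it gives $|\lambda|<1$ once $f_*|_{\ker H}$ is known to be expanding.

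Two facts are missing:
\begin{enumerate}[label=(\alph*)]
\item Every leaf of $\tilde{\mathcal W}^u$ is at bounded Hausdorff distance from a translate of $\ker H$. This follows from the H\"older semiconjugacy of the deck action on $\mathcal L$ together with cocompactness.
\item $\tilde{\mathcal W}^u$ is quasi-isometric. Your justification here is a non sequitur. Leaf-space distance measures separation between \emph{different} leaves, whereas quasi-isometry compares intrinsic and ambient distance \emph{within one} leaf. It needs a proof of its own.
\end{enumerate}

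Given (a) and (b), the standard argument runs as follows. For fixed $k$ the map $\tilde f^k-f_*^k$ is $\Z^n$-periodic, hence bounded. Let $\mu$ be an eigenvalue of $f_*|_{\ker H}$ with real generalized eigenspace $E_\mu\subset\ker H$. Take $x,y$ in one leaf with $y-x=tv+O(1)$ and $v\in E_\mu$. Let $t\to\infty$ in
$\lambda_u^k(t\|v\|-O(1))\le a\bigl(t\|f_*^kv\|+O_k(1)\bigr)+b$.
This gives $\lambda_u^k\le a\|f_*^k|_{E_\mu}\|$ for all $k$, hence $|\mu|\ge\lambda_u>1$, where $\lambda_u$ is the minimal expansion of $E^u$. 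Only then is $f_*$ hyperbolic with $\ker H$ as its unstable space, and Franks' theorem applies.
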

\begin{remark}
    \label{OneDimensionalStable}

Denote $A$ as the hyperbolic $n\times n$ integer matrix induced from $f_*$. By Proposition 2.18 of \cite{Zhang2023}, one can take a unique codimension one vector subspace $W$ in $\mathbb R^n$ and $R>0$, such that each leaf of $\tilde{\mathcal W^u}$ lies in the $R$-neighborhood of a translate of $W$ and $\forall x\in \mathbb R^n$, the $R$-neighborhood of the leaf $\tilde{\mathcal W^u}(x)$ contains $W+x$. By Proposition 4.3 of \cite{Zhang2023}, $W$ is actually spanned by eigenvectors whose eigenvalues have absolute values greater than 1. So the stable distribution of $A$ has only one dimension.
\end{remark}
Now let $f:M\to M$ be a codimension one partially hyperbolic diffeomorphism on a closed $n$-manifold $M$ with splitting $TM=E^c\oplus E^u$. By Theorem \ref{ZXCodimension1PHD}, $M\simeq \mathbb T^n$. By remark \ref{OneDimensionalStable}, $f_*:\pi_1(M)\to \pi_1(M)$ induces an $n\times n$ hyperbolic integer matrix $A$ with exactly one-dimensional stable distribution. Also, there is a foliation ${\mathcal W}^{c}$ tangent to $E^c$ such that any piecewise $C^1$-curve tangent to $E^c$ must lie in a unique leaf of ${\mathcal W}^{c}$, hence $E^c$ is uniquely integrable. Then ${\mathcal W}^{c}$ is the unique foliation tangent to $E^c$ and is invariant under $f$. 
\begin{lemma}
\label{CUGPS}
    There is a global product structure between $\tilde {\mathcal W}^{c} $ and $\tilde {\mathcal W}^u$ in the universal cover, and both of $\tilde {\mathcal W}^{c} $ and $\tilde {\mathcal W}^u$ are quasi-isometric. 
    \begin{proof}
        Since $\mathcal W^u$ is codimension one and is without holonomy and ${\mathcal W}^{c} $ is transverse to it, Theorem VIII. 2.2.1  in \cite{Hector1983} implies that there is a global product structure between $\tilde {\mathcal W}^{c} $ and $\tilde {\mathcal W}^u$. Furthermore, by Proposition 2.24 in \cite{Zhang2023}, the global product structure implies the quasi-isometry of $\tilde {\mathcal W}^{c} $. Also, $\mathcal W^u$ is quasi-isometric in light of Proposition 2.22 in \cite{Zhang2023}
    \end{proof}
\end{lemma}

Denote $\pi:{\mathbb {R}}^n\to \mathbb T^n$ as the universal covering map and let $\tilde f :{\mathbb {R}}^n\to {\mathbb {R}}^n$ be a lift of $f$. In light of the shadowing property of $A$, there is $K_1>0$ such that $\forall \tilde x\in \mathbb R^n$, there is a unique $\tilde y\in \mathbb R^n$ such that $\| {\tilde f}^{n}(\tilde x)-A^{n}(\tilde y) \|\le K_1,\forall n\in \mathbb Z$. Then we have the well-defined continuous map $H:\tilde x\mapsto \tilde y$ at a distance not larger that $K_1$ from the identity, such that $H\circ A=\tilde f\circ  H$. Then it is surjective ( This follows from a degree argument, see Chapter 2 of Hatcher’s book, \cite{Hatcher2002}  ) and $\sup_{\tilde x\in \mathbb R^n}\text{diam }H^{-1}(\tilde x)<\infty$. 

Also, it is not difficult to see that $H$ is $\mathbb Z^n$-periodic so $H$ induces a surjective $h_f:\mathbb T^n\to \mathbb T^n$, such that $H$ is a lift of $h_f$ and $A\circ h_f=h_f\circ f$.
Also, one can directly obtain $h_f$ from $f$ by results in \cite{Franks1970} and $H$ is a lift of $h_f$ and $h_f$ is homotopic to $\text{Id}_{\mathbb T^3}$, since $A$ is a $\pi_1$-diffeomorphism.

\begin{lemma}
    \label{hgmapsUtoU}
    $h_f$ injectively maps unstable manifolds of $f$ to unstable manifolds of $A$.
    \begin{proof}
It is equivalent to consider the universal cover. 

Firstly, for any two distinct points $\tilde x,\tilde{y}$ in  $\mathbb R^n$  in the same unstable manifold of $\tilde f$. Note that by Lemma \ref{CUGPS}, $\tilde {\mathcal W}^u$ is quasi-isometric, so there are $a,b>0$ such that $\forall n\in \N_+, d_{\tilde {\mathcal W}^u}({\tilde f^n}(x),{\tilde{f}^n}(y))\le ad({\tilde f^n}(x),{\tilde{f}^n}(y))+b$. Hence $d({\tilde f^n}(x),{\tilde{f}^n}(y))\to \infty$ as $n\to \infty$. 

Also, notice that $\tilde f({H}^{-1}(\tilde x))={H}^{-1}(A(\tilde x))$ and $\sup_{\tilde x\in \mathbb R^n}\text{diam }H^{-1}(\tilde x)<\infty$, hence any two distinct points in  $\mathbb R^n$ in the same unstable manifold of $\tilde f$ cannot lie in the same fiber of $H$. This implies the injectivity of $h_f$ on unstable manifolds of $f$. 

Finally, the diameter of the right-hand side of the identity $A^n\circ H(\{\tilde x,\tilde y\})=H\circ {\tilde f}^n(\{\tilde x,\tilde y\})$ goes to $0$ as $n\to -\infty$. So the diameter of $A^n\circ H(\{\tilde x,\tilde y\})$ remain uniformly bounded as $n\to -\infty$.  Thus $H(\tilde{x})$ and $H(\tilde{y})$ must lie in the same unstable leave of $A$.
    \end{proof}
\end{lemma}
\begin{lemma}
    \label{hgmapsCtoS}

$h_f$ maps center curves into stable manifolds of $A$
\begin{proof}
 Take a center arc $\gamma$  of $\mathbb T^n$ small enough. For $\delta >0,$ denote $\mathcal W^u_\delta(\gamma ) :=\cup _{z\in \gamma  }\mathcal W^u_\delta(z)$. And we take $\delta$  small enough such that for any distinct $x,y\in \gamma$  we have that $\mathcal W^u_\delta(x)\cap \mathcal W^u_\delta(y)=\emptyset$. Obviously $\forall m\in \N_+$, $f^m(\mathcal W^u_\delta(\gamma ) )\supset \mathcal W^u_\delta(f^m(\gamma ) )$  thus by a ‘volume versus length’ argument ( note that the unstable distribution is codimension one in $\mathbb T^n$ ) implies that for some $C>0$: 

$$
\forall m\in \N,\infty>\text{Vol}(\mathbb T^n)\ge \text{Vol}(f^m(\mathcal W^u_\delta(\gamma ) ))\ge \text{Vol}(\mathcal W^u_\delta(f^m(\gamma ) ) )\ge C\cdot \text{length}(f^m(\gamma ))
$$
Therefore the length of $\{f^m(\gamma )\}_{m\in \N} $ is uniformly bounded, hence $\{H\circ {\tilde f}^m(\tilde \gamma) \}_{m\in \N} =\{A^m\circ H(\tilde \gamma )\}_{m\in \N}$ has a bounded diameter, where $\tilde \gamma \subset \mathbb R^n$ is a lift of $\gamma $. This fact obviously forces $H(\tilde \gamma )$ to lie in a stable manifold of $A$, and the proof is finished after mapping down by the covering map. 
\end{proof}
\end{lemma}

\begin{proposition}
    \label{fibersareCarcs}

$\forall z\in \mathbb T^n,$ ${h_f}^{-1}(z)$ is a center arc in $\mathbb T^n$ ( might be a trivial arc ).
\begin{proof}
Firstly we show that ${h_f}^{-1}(z)$ is contained in a center leaf of $H$. If not, there are $\tilde x,\tilde y\in \mathbb R^n$ in different center leaves with respect to $H$ such that $H(\tilde x)=H(\tilde y)$, then, by the global product structure shown in Lemma \ref{CUGPS} , there is $\tilde z \ne \tilde x$, the intersection point between the unstable manifold of $\tilde x$ and the center manifold of $\tilde y$ with respect to $\tilde f$. Then, in the image of $H$, by Lemma \ref{hgmapsUtoU}, \ref{hgmapsCtoS}, an unstable leaf of $A$ has to intersect a stable leaf of $A$ twice, which is impossible. 

Also, we need to show that for each $z\in \mathbb T^n$, ${h_f}^{-1}(z)$ is connected. If not, there is $\tilde z\in \mathbb R^n$ such that $H^{-1}(\tilde z)$ is not connected. By the continuity of $H$, there is no isolated points in $\tilde {\mathcal W}^c(\tilde x)\setminus H^{-1}(\tilde z)$, where $\tilde x\in H^{-1}(\tilde z)$. 
Further, since the center curves are properly embedded by their quasi-isometry shown in Lemma \ref{CUGPS}, $H^{-1}(\tilde z)$ is compact in $\tilde {\mathcal W}^c(\tilde x)$ with at least two components. Hence there must be an open interval $I \subset \tilde {\mathcal W}^c(\tilde x)\setminus H^{-1}(\tilde z)$ between the elements of $H^{-1}(\tilde z)$ in $\tilde {\mathcal W}^c(\tilde x)$ such that $\overline I \cap H^{-1}(\tilde z) \ne \emptyset$
Then since the diameter of $\{H ^{-1}(A^m(\tilde z))\}_{m\in \mathbb Z}$ are uniformly bounded, and for any $m\in \mathbb Z,{\tilde f}^m({H}^{-1}(\tilde z))={H}^{-1}(A^m(\tilde z))$, we have that $\{{\tilde f}^m({H}^{-1}(\tilde z))\}_{m\in \mathbb Z}$ 
is uniformly bounded with respect to the arc length of $\tilde {\mathcal W}^c$ by its quasi-isometry. As a result, $\{{\tilde f}^m(\overline {I})\}_{m\in \mathbb Z}$ is also uniformly bounded with respect to the arc length, but it is not possible since $H(\overline {I})$ consists of different points and we have that for all $m\in \mathbb Z,H\circ {\tilde f}^m=A^m\circ H$.

\end{proof}
\end{proposition}
\begin{lemma}
    \label{hgfibersareC}

For all $x\in \mathbb T^n$, there must be some $z\in  {{\mathcal W}}^s_A(h_f(x))$ such that ${h_f}^{-1}(z)$ consists of one point. 
\begin{proof}
  Let $\tilde z_1,\tilde z_2\in \mathbb R^n$ be two distinct points such that $\tilde z_2\in {\tilde {\mathcal W}^s}_A(\tilde z_1)$ and suppose that $H^{-1}(\tilde z_2)$ and $H^{-1}(\tilde z_1)$ are not in the same center leaf of $\tilde f$. By the global product structure between $\tilde{\mathcal W}^c$ and $\tilde {\mathcal W}^u$ , there are $\tilde x\in H^{-1}(\tilde z_1)$ and $\tilde y\in H^{-1}(\tilde z_2)$. Let $\tilde z$ be the intersection point between the unstable manifold of $\tilde x$ and the center manifold of $\tilde y$ with respect to $\tilde g$. Then in the image of $H$, an unstable leave of $A$ has to intersect a stable leave of $A$ twice, which is impossible. Hence $\forall y\in { {\mathcal W}}^s_A(h_f(x))$, $h_f^{-1}(y)\subset \mathcal W^c(x)$. 
Also, by Lemma \ref{hgmapsCtoS}, we have $h_f(\mathcal W^c(x))\subset { {\mathcal W}}^s_A(h_f(x))$, as a result, we have

\begin{align*}
    \mathcal W^c(x)\subset h_f^{-1}\circ h_f(\mathcal W^c(x))\subset h_f^{-1}({ {\mathcal W}}^s_A(h_f(x)))\\=\cup _{y\in { {\mathcal W}}^s_A(h_f(x))}h_f^{-1}(y)\subset \cup_{y\in { {\mathcal W}}^s_A(h_f(x))}\mathcal W^c(x)=\mathcal W^c(x).
\end{align*}
Hence for all $x\in \mathbb T^n,h_f^{-1}({ {\mathcal W}}^s_A(h_f(x)))=\mathcal W^c(x)$, then points in $ {{\mathcal W}}^s_A(h_f(x))$ defines an uncountable partition of $\mathcal W^c(x)$, so there must be some $z\in  {{\mathcal W}}^s_A(h_f(x))$ such that ${h_f}^{-1}(z)$ consists of one point. ( Infact, this happens for all but countably many points )
\end{proof}
\end{lemma}
\begin{proposition}
    \label{periodicpointwithsmallfiber}

$\forall \epsilon>0$ there is a periodic point $p\in \mathbb T^n$ such that $\gamma _p:={h_f}^{-1}(h_f(p))$ has length smaller than $\epsilon$.
\begin{proof}
By Lemma \ref{hgfibersareC}, take $x\in T_0$ such that ${h_f}^{-1}(h_f(x))$ consists of one point. Then since $A$ is a hyperbolic toral automorphism, we can take a sequence of periodic points $q_m$ approximating $h_f(x)$. Denote $c_m$ as the length of ${h_f}^{-1}(h_f(q_m))$. We now show that $c_m\to 0$ as $m\to \infty$. 

If not, by passing to a subsequence we can always find $\delta >0$ and a pair $(x_m,y_m)\in \mathbb T^n\times \mathbb T^n$ such that $d(x_m,y_m)\ge \delta$  and $h_f(x_m)=h_f(y_m)=q_m$. 

Since $\mathbb T^n\times \mathbb T^n$ is compact, by passing to a subsequence again, there is $x',y'$ such that $x_m\to x'$ and $y_m\to y'$, then we have $d(x',y')\ge \delta$  but $h_f(x')=h_f(y')=h_f(x)$, leading to a contradiction. 

Now take $c_m<\epsilon$ and consider the arc ${h_f}^{-1}(q_m)$, so there is a period $k\in \N_+$ such that ${\tilde f}^k({h_f}^{-1}(q_m))={h_f}^{-1}(q_m)$. Then one can find a fixed point $p$ under ${\tilde{f}}^k$  since it is essentially a map of the interval. 
\end{proof}
\end{proposition}
\newpage
\section{Compact Center-Unstable Submanifolds}
\label{CUsub}
Let $M$ be a closed connected Riemannian manifold and $f:M\to M$ a partially hyperbolic diffeomorphism with the splitting $TM=E^s\oplus E^c\oplus E^u$. 
\begin{definition}
    An immersed submanifold $S\subset M$ is called a $cu$-submanifold if it is tangent to $E^{cu}:=E^c\oplus E^u$. We call $cu$-submanifold homeomorphic to an $n$-torus a $cu$-$n$-torus and we will directly say a $cu$-torus if $n=2$. 
\end{definition}
\subsection{Restrictions on the underlying manifolds}
When $M$ is a closed connected 3-manifold, note that compact $cu$-submanifolds are foliated by lines so all of them are tori by Poincaré-Hopf.

The existence of a $cu$-torus in a 3-manifold poses a strong restriction on the underlying manifold. In \cite{RodriguezHertz2016} the authors further developed the results in \cite{Hertz2011}:
\begin{theorem} ( Theorem 1.3 in \cite{RodriguezHertz2016} )
\label{Toriwith}
Let $f$ be a partially hyperbolic diffeomorphism of an orientable 3-manifold. If there exists either a $cu$-torus or an $su$-torus, then $M$ is a mapping torus of either $\pm\text{Id}$ or a hyperbolic toral automorphism.

\end{theorem}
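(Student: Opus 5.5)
Since this is Theorem 1.3 of \cite{RodriguezHertz2016}, quoted from the literature, what follows is only a sketch of how I would reprove it. The argument is the same for $cu$-tori and $su$-tori, so I describe the $cu$ case. The plan has four steps: find a periodic torus, show it is incompressible, identify the induced action on $\pi_1(T)$, and then cut $M$ along $T$ and use three-manifold topology.

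\textbf{Step 1 (periodic torus).} Let $T$ be a $cu$-torus. Its iterates $f^{-m}(T)$ are again $cu$-tori, all transverse to $E^s$, and under backward iteration their unstable direction is contracted. I would show that the family of $cu$-tori is compact in the Hausdorff topology and that its members cannot accumulate. For this I would use a uniform local product structure: a $cu$-disc is a graph over $E^{cu}$ with bounded slope. It follows that there are only finitely many $cu$-tori, so some $f^k(T)=T$. Replacing $f$ by $f^k$, I assume $T$ is invariant.

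\textbf{Step 2 (incompressibility).} The restriction $\mathcal W^u|_T$ is a foliation of $T$ by lines with no compact leaves, since unstable leaves are lines. Suppose $T$ were compressible. Because $M$ is orientable, $T$ would then bound a solid torus, or lie in a ball. Following \cite{Hertz2011}, $f$ expands the $u$-direction and preserves this side of $T$. A volume-versus-length argument, as in Lemma \ref{hgmapsCtoS}, applied to a thin $cu$-neighbourhood, or a Novikov/Haefliger argument on the transverse strong-stable lines crossing the compressing disc, would then give a contradiction. Hence $\pi_1(T)\to\pi_1(M)$ is injective.

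\textbf{Step 3 (action on $\pi_1(T)$).} On $T$, $f|_T$ has a dominated splitting $E^c\oplus E^u$ with $E^u$ expanding. By a two-dimensional version of Theorem \ref{ZXCodimension1PHD}, $f|_T$ is semiconjugate to its linear part $B=(f|_T)_*$, and $B$ is hyperbolic. In the $su$ case $f|_T$ is Anosov, so again $B$ is hyperbolic.

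\textbf{Step 4 (topology of $M$).} Cut $M$ along $T$ to obtain a compact orientable manifold $M'$ with incompressible torus boundary. The map $f$ induces a homeomorphism of $M'$ whose restriction to the boundary acts by the hyperbolic matrix $B$. I would argue that the boundary subgroup $\mathbb Z^2$ must then be normal of finite index in $\pi_1(M')$, for instance via the JSJ decomposition and Waldhausen's rigidity: the Seifert and hyperbolic pieces admit no boundary homeomorphism inducing an Anosov map. This forces $M'$ to be an $I$-bundle over $T^2$ or over the Klein bottle. The twisted case is handled by passing to the orientation double and reassembling; in the product case $M$ is a $T^2$-bundle over $S^1$ with monodromy $A$. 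Finally, $f$ acts on this bundle, so up to isotopy $B$ commutes with $A$. The centralizer in $GL_2(\mathbb Z)$ of a hyperbolic $B$ is virtually $\{\pm B^j\}$, so $A$ is either $\pm\mathrm{Id}$ or hyperbolic; in particular, parabolic monodromy is excluded.

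I expect Step 4 to be the main obstacle, specifically ruling out every piece of the JSJ decomposition other than an $I$-bundle. Step 1's finiteness statement is the second delicate point. For it I would first try the compactness of the space of $cu$-tori combined with the uniform transversality to $E^s$.
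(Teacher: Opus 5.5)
Your skeleton (periodic torus, hyperbolic $B=(f|_T)_*$, incompressibility, then cut and classify) matches the route the paper attributes to \cite{RodriguezHertz2016}, \cite{RodriguezHertz2008} and \cite{Hertz2011}. Step 1 is just Theorem \ref{AndyCptCuSubmanifolds}. Step 3 needs no two-dimensional version of Theorem \ref{ZXCodimension1PHD}: $\mathcal W^u|_T$ has no compact leaves, hence an irrational asymptotic direction that $B$ must preserve.

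\textbf{Step 2 has no working mechanism.} A volume-versus-length estimate controls forward iterates of arcs (or of $T$) near $T$. It says nothing about a compressing disc, which is neither invariant nor dynamically controlled. Haefliger's argument needs a codimension-one foliation, but you only have a single torus and the one-dimensional $\mathcal W^s$. The dichotomy ``bounds a solid torus or lies in a ball'' also presupposes that $M$ is irreducible; that is not a hypothesis and must be obtained separately.

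What works is algebraic, and it needs Step 3 \emph{first}. In an irreducible $M$ a compressible torus separates, and both sides are $f^2$-invariant. For a side $N$, the kernel of $H_1(T;\mathbb Q)\to H_1(N;\mathbb Q)$ is one-dimensional (``half lives, half dies'') and $B^2$-invariant. Such a line is spanned by an integer eigenvector whose eigenvalue is rational, hence $\pm1$, contradicting hyperbolicity. So the contradiction comes from the action on $\pi_1(T)$, not from expansion along $E^u$.

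\textbf{In Step 4 the twisted case cannot be ``handled by passing to the orientation double and reassembling''.} If $T$ separates, your conclusion is $M=N_1\cup_\varphi N_2$ with both $N_i$ twisted $I$-bundles over the Klein bottle. A double cover of such an $M$ is a torus bundle, but $M$ itself generally is not a mapping torus: for most gluings $b_1(M)=0$. So reassembling cannot yield the stated conclusion, and this case must be \emph{excluded}.

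Write $\pi_1(N_i)=\langle a,b\mid bab^{-1}=a^{-1}\rangle$, with boundary subgroup $\langle a,b^2\rangle$. Here $\langle b^2\rangle$ is the center, and $\langle a\rangle$ consists exactly of the elements of $\langle a,b^2\rangle$ conjugate to their inverses. Hence every self-homeomorphism of $N_i$ acts on $\pi_1(\partial N_i)$ by $\mathrm{diag}(\pm1,\pm1)$, and $B^2=(f^2|_T)_*$ would have this form, which is impossible. The homology argument above already shows directly that an Anosov torus never separates.

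Finally, in the product case $f$ only gives $BAB^{-1}=A^{\pm1}$, not $BA=AB$. Apply your centralizer argument to $B^2$ instead.
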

Then by the classification result in \cite{HammerlindlPotrie2015}, we have the following corollary:
\begin{corollary}
    Any closed, connected, orientable 3-manifold with a virtually solvable fundamental group must be one of the listed in Theorem \ref{Toriwith}, if it admits a dynamically incoherent partially hyperbolic diffeomorphism. 
\end{corollary}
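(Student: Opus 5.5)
This follows by combining Theorem \ref{Toriwith} with the classification of partially hyperbolic diffeomorphisms on 3-manifolds with virtually solvable fundamental group in \cite{HammerlindlPotrie2015}. We argue by contraposition: suppose $M$ is closed, connected, orientable, with $\pi_1(M)$ virtually solvable, and $f:M\to M$ is a dynamically incoherent partially hyperbolic diffeomorphism. We must show that $M$ is a mapping torus of $\pm\mathrm{Id}$ or of a hyperbolic toral automorphism.

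The first step is to recall what \cite{HammerlindlPotrie2015} gives. Under the virtually solvable hypothesis, a partially hyperbolic diffeomorphism that is not dynamically coherent must admit a periodic torus tangent to $E^{cu}$ or to $E^{cs}$. This is the content of their extension of \cite{Potrie2015}, as recalled in the introduction: for such manifolds, the absence of $cs$/$cu$-tori forces dynamical coherence. So $f$ has, for some $k\ge1$, an $f^k$-invariant torus $T$ tangent to $E^{cu}$ or to $E^{cs}$.

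Next we reduce to the $cu$ case. If $T$ is tangent to $E^{cs}$, we replace $f$ by $f^{-1}$. This is still partially hyperbolic, with stable and unstable bundles swapped, so $T$ becomes a $cu$-torus for $f^{-1}$. Passing to the iterate $f^k$, or working with $f$ directly since Theorem \ref{Toriwith} only needs existence, we obtain a partially hyperbolic diffeomorphism of the orientable $M$ that admits a $cu$-torus.

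Finally, applying Theorem \ref{Toriwith} to this diffeomorphism gives that $M$ is a mapping torus of $\pm\mathrm{Id}$ or of a hyperbolic toral automorphism, as claimed.

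The main obstacle is bookkeeping rather than new mathematics. We need to check that the precise statement in \cite{HammerlindlPotrie2015} yields an embedded torus tangent to a center-stable or center-unstable bundle, and not merely a weaker conclusion such as a branching foliation. We also need to check that its hypotheses match ours, namely orientability and virtually solvable rather than solvable fundamental group. If their conclusion is phrased through a classification — Anosov-like on $\mathbb T^3$ or nilmanifolds, or skew-product type — we instead verify case by case that dynamical coherence holds unless such a torus exists. The non-coherent cases are exactly those on $\mathbb T^3$ or on mapping tori of $\pm\mathrm{Id}$ or hyperbolic automorphisms, which are already in the list.
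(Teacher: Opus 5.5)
Your proof is correct and follows the paper's approach: the paper gets the corollary by combining the Hammerlindl--Potrie classification (on such manifolds, dynamical incoherence forces a periodic torus tangent to $E^{cs}$ or $E^{cu}$) with Theorem \ref{Toriwith}. You also reduce the $cs$ case to the $cu$ case by passing to $f^{-1}$, which the paper leaves implicit, and you do this correctly.
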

Further, a complete classification of partially hyperbolic diffeomorphisms of closed oriented 3-manifolds with a center-stable or center-unstable torus can be seen in \cite{HammerlindlPotrie2019}. For those diffeomorphisms, one can find a finite pairwise disjoint collection of all center-stable and center-unstable torus. Then those tori can cut $M$ into components in which an iterate of $f$ is conjugate to a ‘skew-product’ over a hyperbolic toral automorphism ( 2-dimensional ) and a continuous map. 

A higher dimensional analogue of the main result in \cite{RodriguezHertz2016} should be Theorem \ref{MainTheorem}, by the following two reasons: 

On the one hand, the main result in \cite{Hertz2011}, which is the theoretical foundation to extend the main result in \cite{RodriguezHertz2016} from $M=\mathbb T^3$ to all closed 3-manifolds, follows from the Jaco-Shalen-Johannson decomposition adapted to the dynamics setting, which is a powerful tool on 3-manifold topology and is not well-established in higher dimensions. 

On the other hand, the bundle-switching method originally introduced in \cite{RodriguezHertzToAppear} was developed in \cite{Hammerlindl2025}. The author claims that for any $d\in \N_+$, under some conditions related to $d$, one can construct a dynamically incoherent partially hyperbolic system on $N\times \mathbb T^d$ from a partially hyperbolic system on a closed manifold $N$, leaving $N\times \{0\}$ as a fixed $cu$-submanifold. It implies that for $n\ge 4$, the family of closed $n$-manifolds having a compact codimension $d$ $cu$-submanifold is not smaller than that of $n-d$ manifolds supporting a certain type of weakly partially hyperbolic diffeomorphism. 
\subsection{Periodicity, Hyperbolicity, and Incompressibility of Compact Center-unstable Submanifolds}
\begin{definition}
    For an immersed submanifold $S\subset M$, we say that it is periodic with respect to $f$ if there is $n\in \N_+$ such that $f^n(S)=S$. If $f(S)=S$, we say that $S$ is fixed or invariant. Also, we say that $S$ is incompressible if the induced fundamental group homomorphism $i_*:\pi_1(S)\to \pi_1(M)$ of the inclusion map $i:S\hookrightarrow  M$ is injective and we say that $S$ is totally compressible if $\ker i_*=\pi_1(S)$
\end{definition}
\begin{definition}
For an embedded $n$-torus $T$ in $M$, we say that $T$ is Anosov if there is a diffeomorphism $g:M\to M$ fixing $T$ such that $(g|_T)_*:\pi_1(T)\to \pi_1(T)$ induces a hyperbolic integer matrix. Furthermore, for an Anosov $n$-torus $T$ in $M$, we say that $T$ is irreducible if one can choose $g$ such that the characteristic polynomial of the induced matrix is irreducible. Then similarly, one can define codimension one Anosov $n$-tori. 
\end{definition}
In \cite{RodriguezHertz2016}, the authors showed that on 3-manifolds all compact $cu$-submanifolds are Anosov tori and then moreover, by Theorem 4.1 in \cite{RodriguezHertz2008}, are incompressible.

In \cite{RodriguezHertz2016}, starting from a $cu$-torus $T$, consider the backward iterates $f^{-n}(T)$. Since the space of compact subsets of $M$ is compact in the Hausdorff metric, a subsequence $f^{-n_k}(T)$ converges, so for large $N\gg L$ the tori $f^{-N}(T)$ and $f^{-L}(T)$ are very close. Using a small stable tubular neighborhood $U(T)$, this closeness implies a trapping relation for some $m=N-L$: $f^{m}(U(T))\subset U(T)$. Then $T_*=\bigcap_{j\ge 0} f^{jm}(U(T))$ is a compact $f^{m}$-invariant set, and in fact a periodic embedded torus diffeomorphic to $T$, still tangent to $E^{cu}$. Hence $T_*$ is a periodic $cu$-torus.

Then set $g=f^m$, for a period $m$ of $T_*$, by a Poincaré–Bendixson argument, the authors showed that $(g|_{T_*})_*:\pi_1(T_*)\to \pi_1(T_*)$ induces a $2\times 2$ hyperbolic integer matrix. Then combining the result in \cite{RodriguezHertz2008}, we have the following:
\begin{proposition}( Propositions 2.2, 2.3  in \cite{RodriguezHertz2016} )
    The existence of a $cu$-torus implies the existence of a periodic Anosov $cu$-torus.

\end{proposition}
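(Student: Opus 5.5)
Let $T$ be a $cu$-torus of $f$ on the orientable $3$-manifold $M$. The plan follows the two steps sketched just above: first produce a periodic $cu$-torus, then show that the return map on it acts hyperbolically on $\pi_1$.

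\emph{Step 1: periodicity.} Fix a small $\delta>0$ and let $U(T)=\bigcup_{x\in T}\mathcal W^s_\delta(x)$ be the stable tubular neighborhood of $T$. Since $T$ is tangent to $E^{cu}$ and $E^s$ is one-dimensional and transverse to it, $U(T)$ is an embedded $I$-bundle over $T$ for $\delta$ small. The backward iterates $f^{-n}(T)$ are again $cu$-tori, and the uniform angle between $E^s$ and $E^{cu}$ gives them uniform local graph structure. By compactness of the hyperspace of compact sets in the Hausdorff metric, some subsequence $f^{-n_k}(T)$ converges. For $N\gg L$ in this subsequence, $f^{-N}(T)$ lies inside the stable tubular neighborhood of $f^{-L}(T)$. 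Because $f^{-N}(T)$ is a closed surface transverse to the stable $I$-fibers, it is a section of that bundle, so each stable fiber meets it exactly once.

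Setting $m=N-L$ and pushing forward by $f^{L}$, uniform contraction along $E^s$ gives $f^{m}(U(T'))\subset \operatorname{int}U(T')$ for the appropriate iterate $T'$ of $T$ (equivalently, the trapping runs for $f^{-m}$ depending on orientation conventions). Define $T_*=\bigcap_{j\ge0}f^{jm}(U(T'))$. Each $f^{jm}(U(T'))$ is an $I$-bundle whose fibers are stable arcs with length shrinking exponentially. Hence $T_*$ meets each stable fiber in exactly one point. It is the graph of a continuous section, and by invariance of the $cu$-cone field it is a $C^1$ torus tangent to $E^{cu}$, periodic with $f^m(T_*)=T_*$.

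\emph{Step 2: hyperbolicity of the action.} Put $g=f^m|_{T_*}$. On $T_*$ we have the one-dimensional unstable foliation, and $g$ expands its leaves. If $g_*\in GL(2,\mathbb Z)$ were not hyperbolic, it would have eigenvalues of modulus one, so the growth of $g^n_*$ would be at most linear. On the other hand, an unstable arc has length growing exponentially under $g^n$. Unstable leaves contain no closed curves, since a closed unstable curve would contradict expansion, so by Poincaré–Bendixson the unstable foliation on $T_*$ is a foliation without compact leaves, hence suspension-like with Reeb components excluded. Leaves in the universal cover are then quasi-geodesic-like, so exponential length growth forces exponential growth of displacement in $\widetilde{T_*}=\mathbb R^2$. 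This contradicts polynomial growth of $g^n_*$ on $H_1$. I expect this step to be the main obstacle: ruling out Reeb-type behavior and making the length-versus-displacement comparison rigorous. I would handle it by using the global transversality to the center direction, together with the Franks-type semiconjugacy to the linear part.

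\emph{Step 3: incompressibility.} Once $T_*$ is an Anosov torus in $M$, Theorem 4.1 of \cite{RodriguezHertz2008} gives incompressibility. This yields a periodic Anosov $cu$-torus.
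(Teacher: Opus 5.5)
Your proposal is correct and follows essentially the same route as the paper's sketch of Propositions 2.2--2.3 of Rodriguez Hertz--Rodriguez Hertz--Ures: a Hausdorff limit of backward iterates, trapping of a stable tubular neighborhood under $f^m$ with $T_*=\bigcap_{j\ge 0}f^{jm}(U)$, and then a Poincar\'e--Bendixson and growth argument on the unstable foliation of $T_*$ to show $(f^m|_{T_*})_*$ is hyperbolic. One caution: drop the suggested Franks-type semiconjugacy in Step 2, since it presupposes the hyperbolicity you are trying to prove; the step is closed instead by noting that a torus foliation without compact leaves has a closed transversal with bounded return time, so the lifted unstable leaves are quasi-isometric, and exponential length growth then contradicts the at most polynomial displacement growth when the linear part is non-hyperbolic.
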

In \cite{Hammerlindl2018}, the authors achieved a further understanding of compact $cu$-submanifolds and the result is generalized to arbitrary dimensions, for example, the ‘approximating’ shown in Theorem 3.1 actually does not occur:
\begin{theorem}( Theorems 2.1, 2.2, 2,3 in \cite{Hammerlindl2018} )
    \label{AndyCptCuSubmanifolds}
A partially hyperbolic diffeomorphism has at most a finite number of compact $cu$-submanifolds, and each of them are periodic. And for $S$ as a compact periodic $C^0$ submanifold, it is a $cu$-submanifold if and only if $\forall x\in S, \mathcal W^s(x)\cap S=\{x\}$.
\end{theorem}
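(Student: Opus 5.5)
The statement is a citation of Theorems 2.1–2.3 of \cite{Hammerlindl2018}. The natural order of proof is the reverse of the order of statement: first the characterization via stable manifolds, then periodicity, then finiteness.

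\emph{Characterization.} Let $S$ be a compact periodic $cu$-submanifold. Since $S$ is tangent to $E^{cu}$, it is transverse to $\mathcal W^s$, so each point of $S$ has a local product neighborhood in which $S$ meets each local stable plaque in exactly one point. Suppose $y\in\mathcal W^s(x)\cap S$ with $y\neq x$, and let $g=f^k$ with $g(S)=S$. Then $d_s(g^j x,g^j y)\to0$, while $g^jx,g^jy$ remain in $S$. Uniform transversality gives a constant $\delta>0$ such that distinct points of $S$ on a common stable plaque are at least $\delta$ apart. This contradicts the fact that the stable distance tends to zero.

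For the converse, let $S$ be a periodic $C^0$ submanifold meeting each stable leaf at most once. I would first show that $S$ is a topological graph over local $cu$-disks. Then I would apply $g^{-j}$: since $S$ is $g$-invariant and backward iteration expands $E^s$, the cone field argument pushes the tangent cones of $S$ toward $E^{cu}$. A $C^0$ graph transverse to $\mathcal W^s$ and invariant under $g^{-1}$ is therefore a fixed point of a graph transform, and hence it is $C^1$ and tangent to $E^{cu}$. The delicate point here is excluding wild $C^0$ behaviour. I would handle it with invariance of domain, which makes $S$ locally transverse to $\mathcal W^s$ in the topological sense, followed by the standard uniqueness of invariant $cu$-graphs.

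\emph{Periodicity.} Given a compact $cu$-submanifold $S$, the iterates $f^{-n}(S)$ are again $cu$-submanifolds, with uniform geometry because they are tangent to $E^{cu}$ and have bounded volume growth under backward iteration. Choose a Hausdorff-convergent subsequence, as in the argument summarized from \cite{RodriguezHertz2016}. For $N\gg L$, the submanifold $f^{-N}(S)$ lies in a thin stable tubular neighborhood $U$ of $f^{-L}(S)$ and projects along stable plaques onto it. Hence $f^{N-L}$ maps $U$ into $U$, and $\bigcap_j f^{j(N-L)}(U)$ is an invariant $cu$-submanifold $S_*$. Applying the first step to $S$ and $S_*$, both are transverse to the stable plaques of $U$. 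Since $f^{-L}(S)$ is itself a leaf-like section of $U$, the injectivity of stable holonomy forces $f^{-N}(S)=f^{-L}(S)$. Therefore $S$ is periodic.

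\emph{Finiteness.} Suppose there are infinitely many compact $cu$-submanifolds. Their volumes are bounded, since each is periodic and therefore equals a limit of backward images. So two of them, $S_1\neq S_2$, are Hausdorff-close, and one lies in a stable tubular neighborhood of the other. By the trapping argument above, both equal the unique invariant section in that neighborhood for a common iterate, which is a contradiction.

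\textbf{Main obstacle.} I expect the hardest step to be the uniform bounds: a volume bound and a lower bound on the injectivity radius, uniform over all compact $cu$-submanifolds. These bounds are what make Hausdorff limits submanifolds rather than laminations. I would try to obtain them from the expansion of $E^{u}$ together with compactness of $M$.
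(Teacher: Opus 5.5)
The paper offers no argument for this statement beyond citing \cite{Hammerlindl2018}, so your outline has to stand on its own. Your forward half of the characterization is fine, and so is the trapping step in finiteness, but the periodicity step has a genuine gap.

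\textbf{Periodicity.} The trapping construction only reproduces what \cite{RodriguezHertz2016} obtain: some periodic $cu$-submanifold $S_*$ near $f^{-L}(S)$. Your next claim, that injectivity of stable holonomy forces $f^{-N}(S)=f^{-L}(S)$, does not follow. Your first step presupposes periodicity, so it says nothing about $S$. Moreover, $f^{-N}(S)$ and $f^{-L}(S)=f^{N-L}(f^{-N}(S))$ are just two consecutive iterates of the graph transform of $f^{N-L}$, converging to $S_*$. Two distinct sections transverse to the plaques contradict nothing.

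What is missing is an argument that uses infinitely many returns of the backward orbit near one periodic object. Here is one way to do it. Let $\lambda<1$ bound $\|Df|_{E^s}\|$, and let $\mathcal O(S_*)$ be the orbit of $S_*$. Since $S_*$ is periodic, your first step gives $\mathcal W^s(z)\cap S_*=\{z\}$ for $z\in S_*$. Hence, for $y$ in the stable saturation of $\mathcal O(S_*)$, the leafwise distance $\tau(y)$ from $y$ to the finite set $\mathcal W^s(y)\cap\mathcal O(S_*)$ is well defined, and $\tau(f^{-m}y)\ge\lambda^{-m}\tau(y)$. Write $f^{-n_l}(S)$ for your Hausdorff-convergent subsequence. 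Then $S_*$ lies near the same limit, so for all large $l$ the set $f^{-n_l}(S)$ is a section of a thin stable tube around $S_*$, and $\tau\le\delta$ on it. For $y\in f^{-n_i}(S)$ this gives $\tau(y)\le\lambda^{n_l-n_i}\delta\to0$. So $f^{-n_i}(S)\subset\mathcal O(S_*)$, and $S$ is periodic.

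A related correction: the uniform stable tubes of the $f^{-n}(S)$ do not come from tangency to $E^{cu}$. They hold because $f^{n}$ maps $\mathcal W^s_\epsilon(y)$ into $\mathcal W^s_\epsilon(f^ny)$. Hence disjointness of the $\epsilon$-plaques through $S$ propagates to every $f^{-n}(S)$, and the volume bound is a consequence of this.

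\textbf{Finiteness.} The uniform bounds you call the main obstacle cannot come from expansion of $E^u$. Your justification (each $S$ is periodic and therefore a limit of backward images) only gives a bound depending on $S$'s own tube size. The bounds come instead from your first step. For periodic $S$, the stable leaves through distinct points are distinct, so the plaques $\mathcal W^s_r(x)$, $x\in S$, are pairwise disjoint for every fixed $r$. Hence every foliation box of a fixed size meets $S$ in at most one sheet. This gives a volume bound depending only on $f$. It also shows that two Hausdorff-close periodic $cu$-submanifolds are sections of each other's stable tubes of uniform size, after which your trapping argument with a common period finishes. You never need Hausdorff limits to be submanifolds.

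\textbf{Converse of the characterization.} It needs $\dim S=\dim E^{cu}$, since a periodic orbit also meets each stable leaf only once; your use of invariance of domain assumes this tacitly. For the regularity, appealing to a graph-transform fixed point requires a $C^1$ competitor section, which you have not produced for a bare $C^0$ manifold. The cleaner route is by contradiction. A secant at $x$ not tangent to $E^{cu}$, pulled back by $g^{-j}$ and controlled by the domination $\|Dg^j|_{E^s}\|\,\|Dg^{-j}|_{E^{cu}}\|\to0$, yields in the limit two distinct points of $S$ on one stable leaf.
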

\begin{corollary}
    For a partially hyperbolic diffeomorphism $f$ on a closed $3$-manifold, all compact $cu$-submanifolds are periodic Anosov 2-tori. 
\end{corollary}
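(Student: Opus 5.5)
This follows by combining Theorem \ref{AndyCptCuSubmanifolds} with the 3-dimensional picture from \cite{RodriguezHertz2016}. Let $S$ be a compact $cu$-submanifold of a closed 3-manifold $M$. By Theorem \ref{AndyCptCuSubmanifolds}, $S$ is periodic. Let $m\in\N_+$ be a period and set $g=f^m$, so that $g(S)=S$ and $g$ is still partially hyperbolic with the same splitting. Since $\dim E^{cu}=2$, $S$ is a closed surface. It carries the line field $E^c|_S$, and after passing to a double cover if necessary this line field is orientable, so it gives a nonvanishing vector field. Poincaré--Hopf then forces $\chi(S)=0$. Thus $S$ is a torus, or a Klein bottle before orientation. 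The Klein bottle case can be passed to the torus via its orientation double cover, or excluded by using $E^u|_S$ together with $E^c|_S$ to get a continuous splitting of $TS$.

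It remains to show that $(g|_S)_*$ is hyperbolic on $\pi_1(S)\cong\Z^2$. I will follow the Poincaré--Bendixson argument of Proposition 2.3 in \cite{RodriguezHertz2016}, as recalled before the statement. The restriction $g|_S$ preserves two transverse one-dimensional foliations on the torus $S$:
\begin{itemize}
\item the unstable foliation $\mathcal W^u|_S$, which is uniquely integrable and expanded;
\item a center foliation obtained by integrating $E^c|_S$.
\end{itemize}
The matrix $B=(g|_S)_*\in GL(2,\Z)$ then falls into one of three cases:
\begin{enumerate}
\item $B$ is periodic (finite order);
\item $B$ is reducible with eigenvalues $\pm1$;
\item $B$ is hyperbolic.
\end{enumerate}

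To rule out the non-hyperbolic cases, I would argue that unstable arcs inside $S$ grow exponentially under $g^k$. In case 1 or 2, the lifts of $g^k|_S$ to $\R^2$ move points at most polynomially far, since $B^k$ grows at most linearly. The unstable foliation on $S$ has no closed leaves: otherwise a closed unstable curve would be expanded by iteration while staying in a bounded homotopy class, contradicting compactness. By Poincaré--Bendixson on $T^2$ it is then conjugate to a linear foliation. Its leaves are therefore quasi-isometric in the lift to $\R^2$ when the slope is irrational. In the rational case with Reeb-type components, I would use the transverse center foliation to exclude them, exactly as in \cite{RodriguezHertz2016}. Exponential growth of unstable length, combined with quasi-isometry, then forces exponential growth of displacement, which contradicts polynomial growth. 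Hence $B$ is hyperbolic and $S$ is a periodic Anosov 2-torus.

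\textbf{Main obstacle.} The delicate step is the quasi-isometry of $\mathcal W^u|_S$ in the lift to $\R^2$, that is, excluding Reeb components and closed or compact unstable leaves. I would handle it by importing the argument of \cite{RodriguezHertz2016} directly. That argument works verbatim once periodicity is in hand, and periodicity is now supplied by Theorem \ref{AndyCptCuSubmanifolds} rather than by the Hausdorff-limit trapping construction.
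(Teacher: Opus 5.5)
Your route is the one the paper intends: periodicity from Theorem~\ref{AndyCptCuSubmanifolds}, then the Poincar\'e--Hopf and Poincar\'e--Bendixson arguments of \cite{RodriguezHertz2016} applied directly to the now-periodic submanifold. However, your exclusion of the Klein bottle is wrong as written.

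A continuous splitting $TS=E^c|_S\oplus E^u|_S$ into two line fields excludes nothing: the Klein bottle $\mathbb R^2/\langle (x,y)\mapsto(x+1,y),\ (x,y)\mapsto(-x,y+1)\rangle$ carries the horizontal and vertical line fields. Passing to the orientation double cover only shows that the \emph{cover} is a torus, whereas the statement is about $S$. You need a dynamical input here, and there are two ways to supply it.
\begin{itemize}
\item \emph{Kneser.} By Kneser's theorem every foliation of the Klein bottle has a closed leaf, but $\mathcal W^u|_S$ has none. To see the latter, use backward iterates: $g^{-k}$ of a closed $u$-leaf would give closed leaves of length tending to $0$, which is impossible. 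Your forward-iteration ``bounded homotopy class'' reasoning gives no contradiction, since arbitrarily long curves live in a fixed homotopy class.
\item \emph{Theorem~\ref{ZXCodimension1PHD}.} Simpler still, $g|_S$ is a codimension one partially hyperbolic diffeomorphism of the closed surface $S$. Theorem~\ref{ZXCodimension1PHD} then gives both $S\cong\mathbb T^2$ and hyperbolicity of $(g|_S)_*$ at once. This is exactly how the paper handles the analogous step in Theorem~\ref{cptCUsubmanifoldincom}, and it makes your trichotomy on $B$ unnecessary.
\end{itemize}

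If you keep the Poincar\'e--Bendixson route, two claims in your sketch need correcting. First, a foliation of $\mathbb T^2$ without closed leaves is in general only \emph{semi}conjugate to a linear one; Denjoy-type minimal sets are not excluded for a $C^0$ foliation with $C^1$ leaves. Second, even a topological conjugacy says nothing about leaf arclength, so ``therefore quasi-isometric'' does not follow. What suffices is weaker: a lifted leaf in $\mathbb R^2$ meets each lifted foliation box in at most one plaque. Otherwise a short transversal would hit it twice and produce a closed transversal in the plane, which cannot exist. Hence a leaf arc inside a ball of radius $R$ has length $O(R^2)$. When $B$ is not hyperbolic, $\tilde g^k$ has at most polynomial displacement, while unstable length grows exponentially, so this bound already gives the contradiction. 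Finally, your ``rational case with Reeb-type components'' is vacuous once closed leaves are excluded.
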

The following result will be proved assuming Theorem \ref{IrreAnosovTori}. 
\begin{theorem}
\label{cptCUsubmanifoldincom}
Let $f:M\to M$ be a partially hyperbolic diffeomorphism on a closed manifold $M$ covered by $\mathbb R^{4}$, such that $\dim E^s=\dim E^c=1$. Then any compact $cu$-submanifold is a periodic incompressible codimension one Anosov $3$-torus. 
\begin{proof}
 Let $T$ be a compact $cu$-submanifold with respect to $f$, since it is compact and injectively immersed, it is embedded in $M$. Theorem \ref{AndyCptCuSubmanifolds} implies that it is periodic under $f$, then one can take a period $k\in \N_+$ such that $f^k(T)=T$. Let $g=f^k$, so $g|_T$ is a codimension one partially hyperbolic diffeomorphism. Also, Theorem \ref{ZXCodimension1PHD} implies that $T$ is a codimension one Anosov $3$-torus. 

Suppose that $T$ is not incompressible, in light of Theorem \ref{IrreAnosovTori}, which will be proved in the last section, we have the components $N_1$ and $N_2$ such that $H_1(N_1)=0,H_1(N_2)\simeq \mathbb Z^3$.
Consider the long exact sequence: 

$$
\cdot \cdot \cdot \xrightarrow{}H_1(N_1,\partial N_1)\xrightarrow{}H_0(\partial N_1)\xrightarrow{i_*}H_0(N_1)\xrightarrow{j_*}H_0(N_1,\partial N_1)\xrightarrow{}0
$$

By Lefschetz duality and Alexander duality, 

$$
H_1(N_1,\partial N_1)\simeq H^3(N_1)\simeq \tilde{H}^3(N_1)\simeq  \tilde{H}_0(N_2)=0
$$

So by exactness of the sequence, $i_*$ is injective and since $H_0(\partial N_1)\simeq H_0(N_1)=\mathbb Z$, and $i_*$ is the induced homology group homomorphism of the inclusion map $i:\partial N_1 \hookrightarrow N_1
$, $i_*$ is an isomorphism. Thus $\ker j_*=H_0(N_1)$, that is, $j_*$ is a trivial homomorphism. But $j_*$ has to be surjective as well, we have that $H_0(N_1,\partial N_1)=0$ and similarly, $H_0(N_2,\partial N_2)=0$. Then Lefschetz duality implies:

$$
H^4(N_1)\simeq H^4(N_2)=0
$$

Then consider the Euler characteristic, with similar computations, also by the universal coefficient theorem, we have

\begin{align*}
    \chi (N_1) &= \dim H_0(N_1) - \dim H_1(N_1) + \dim H_2(N_1) - \dim H_3(N_1) + \dim H_4(N_1) \\
    &= 1-0+\dim H^1(N_2)-\dim \tilde {H}^0(N_2)+0\\&=1+\dim H_1(N_2)-\dim \tilde{H}_0(N_2)\\
    &= 1+3-0=4
\end{align*}
and 

\begin{align*}
    \chi (N_2) &= \dim H_0(N_2) - \dim H_1(N_2) + \dim H_2(N_2) - \dim H_3(N_2) + \dim H_4(N_2) \\
    &= 1 - 3+ \dim H^1(N_1) - \dim \tilde{H}^0(N_1) + 0 \\
    &=-2+\dim H_1(N_1)-\dim \tilde{H}_0(N_1)\\&=-2
\end{align*}

In light of the Jordan-Brouwer Separation Theorem and note that we identify ${\mathbb S}^{4}$ with $\mathbb R^{4}\sqcup\{\infty\}$, one of $N_1$ and $N_2$ is a compact subset of $\mathbb R^{4}$, denoted as $N_*$.

Now consider the stable distribution $\tilde E^s$ of a lift $\tilde g:\mathbb R^{4}\to \mathbb R^{4}$ of $g$. Since $T$ is a $cu$-submanifold, we have a non-vanishing continuous vector field $\tilde X^s$ defined on $N_*$, in the direction of $\tilde E^s$, pointing in the outward normal direction along $\partial N_*$. Then one can take a smooth non-vanishing vector field $\tilde X$ on $N_*$ approximating $\tilde X^s$,  still pointing in the outward normal direction along $\partial N_*$. Hence the Poincaré–Hopf Theorem shows that $N_*$ has zero Euler characteristic. It is impossible since $\chi (N_1)=4,\chi(N_2)=-2$ and $N_*$ is one of them.
\end{proof}
\end{theorem}
By Proposition \ref{cptCUsubmanifoldincom}, Theorem \ref{SecondTheoremCUDim4} immediately follows from Theorem \ref{SecondTheoremCU}, of which we will give a proof right now:
\begin{proof}[Proof of Theorem \ref{SecondTheoremCU}]
    
Now suppose $f$ has a compact incompressible $cu$-submanifold $T$. In light of Theorem  \ref{AndyCptCuSubmanifolds} and taking a finite iterate, we might as well assume that $T$ is $f$-invatiant.  Also, by Theorem  \ref{ZXCodimension1PHD}, $T$ is a codimension one Anosov $n$-torus. 

Then denote the induced $(n+1)\times (n+1)$ integer matrix of $f_*:\pi_1(\mathbb T^{n+1})\to \pi_1(\mathbb T^{n+1})$ as $B$, we have $B=A\times I$, where $A$ is the $n\times n$ hyperbolic integer matrix induced from $(f|_T)_*:\pi_1(T)\to \pi_1(T)$, which is with exactly one-dimensional stable distribution, by Remark \ref{OneDimensionalStable}.

Since $f$ is leaf conjugate to $N$, there is a homeomorphism $\psi :\mathbb T^{n+1}\to \mathbb T^{n+1}$, such that $\forall \mathcal L\in {\mathcal W}^{c},\psi\circ f(\mathcal{L})=N\circ \psi (\mathcal{L})$. Hence $\forall \mathcal L\in {\mathcal W}^{c},f(\mathcal{L})=\psi ^{-1}\circ N\circ \psi (\mathcal{L})=\psi ^{-1}\circ\psi( \mathcal{L})=\mathcal{L}$ and $\mathcal{L}$ is compact. 

Suppose that there is $x\in T$ such that $\mathcal W^c(x)\subset T$ then denote $\tilde {\mathcal L}$ as a lift of it in the universal cover $\tilde T\simeq \mathbb R^n$ of $T$. Then take $\tilde f_T :\mathbb{R}^n\to \mathbb{R}^n$ as a lift of $f|_T$. By Lemma \ref{hgfibersareC} we have $H(\tilde{\mathcal{L}})=H\circ H^{-1}(\tilde{\mathcal{W}^{s}_A}(H(\tilde{x})))=\tilde{\mathcal{W}^{s}_A}(H(\tilde{x}))$. However, $\tilde{\mathcal{W}^{s}_A}(H(\tilde{x}))$ is a line with irrational slope and $\tilde{\mathcal{L}}$ is a line with rational slope by the compactness of $\mathcal{L}$ and this contradicts to the fact that $H$ has a bounded distance from the identity.
\end{proof}
\begin{remark}
    The proof relies on the fact that $H$ cannot be a map between two non-parallel lines whereas if all center leaves in the ambient manifold $\mathbb T^4$ are not contained in $T$ , the contradiction could be avoided. Actually in light of Theorem  1.3 of \cite{HammerlindlPotrie2014}, partially hyperbolic diffeomorphisms must be leaf conjugate to their linear parts if it is dynamically coherent. And there is a good example in \cite{RodriguezHertzToAppear} ( See Lemma 2.5(2) of \cite{RodriguezHertzToAppear} ) on $\mathbb T^3$.
\end{remark}
\newpage
\section{Existence of Compact Leaves}
\label{NeoCLv}
In this section, we only need to prove Theorem \ref{MainTheorem} since by Proposition \ref{cptCUsubmanifoldincom}, we have incompressibility among all such compact $cu$-submanifolds hence Theorem \ref{MainTheoremDim4} is a corollary of Theorem \ref{MainTheorem} in the $\mathbb T^4$ case. 
\begin{theorem}( Haefliger argument \cite{Haefliger1962} )
\label{Haefliger}
    On a simply connected manifold $\tilde M$, let $\mathcal F$ be a codimension one foliation and let $\mathcal F^{\perp}$ be a transverse foliation. Then if there is a leaf of $\mathcal F^{\perp} $ that can intersect a leaf of $\mathcal{F}$ more than once, $\mathcal F$ must have a non-simply connected leaf. 
\end{theorem}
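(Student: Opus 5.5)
This is the classical Haefliger argument, and I would prove it by contradiction. Suppose every leaf of $\mathcal F$ is simply connected, and that some leaf $\ell$ of $\mathcal F^{\perp}$ meets a leaf $L$ of $\mathcal F$ in two distinct points $p,q$. Let $\alpha\subset \ell$ be the transverse arc from $p$ to $q$, and let $\beta\subset L$ be a path in $L$ from $q$ back to $p$. The concatenation $\alpha\ast\beta$ is a closed curve in $\tilde M$, and a small perturbation near $q$ and $p$ turns it into a closed curve $\gamma$ that is everywhere transverse to $\mathcal F$. To do this, I push $\beta$ off $L$ slightly in the positive transverse direction, using the flow along $\mathcal F^{\perp}$ in a thin foliated neighbourhood of $\beta$. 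I have to check that $\alpha$ arrives at $q$ and leaves $p$ on sides that are consistent, so that the pushed arc rejoins $\alpha$ transversally. If the orientations do not match, I instead use the first return of $\ell$ to $L$ on the appropriate side, or replace the pair $(p,q)$ by a pair where the sides match.

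Since $\tilde M$ is simply connected, $\gamma$ is null-homotopic. I take a null-homotopy $F:D^2\to\tilde M$ with $F|_{\partial D^2}=\gamma$ and put it in general position with respect to $\mathcal F$. By Haefliger's transversality lemma the pulled-back foliation $F^*\mathcal F$ on $D^2$ then has only finitely many Morse-type singularities: centers and saddles. The boundary $\partial D^2$ is transverse to $F^*\mathcal F$. A Poincaré--Hopf/Euler characteristic count on the disk, together with the Poincaré--Bendixson theory for singular foliations of $D^2$, produces either a closed leaf $c$ of $F^*\mathcal F$ with nontrivial one-sided holonomy, namely a cycle bounding a region filled by leaves spiralling onto it, or a saddle connection cycle playing the same role.

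The final step is to push this back into $\tilde M$. The curve $F(c)$ lies in a single leaf $L'$ of $\mathcal F$. Because leaves of $\mathcal F$ are assumed simply connected, $F(c)$ is null-homotopic in $L'$, and therefore $\mathcal F$ has trivial holonomy along $F(c)$. On the other hand, the one-sided holonomy of $F^*\mathcal F$ along $c$ is nontrivial: nearby leaves on one side are not closed, since they spiral. A local argument comparing holonomy of $F^*\mathcal F$ with holonomy of $\mathcal F$, where the latter is trivial and so nearby loops close up in nearby leaves, lets me modify $F$ to eliminate the limit cycle. Inductively reducing the number of singularities or cycles eventually forces a closed transversal bounding a disk without any singular configuration, which contradicts Poincaré--Bendixson. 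Hence some leaf must have nontrivial $\pi_1$.

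The main obstacle is the careful simplification of the singular foliation on the disk, eliminating centers and saddle pairs, together with the transfer of trivial holonomy from $\mathcal F$ to $F^*\mathcal F$. This is the technical heart of Haefliger's theorem. I would handle it by citing the standard treatment in \cite{Hector1983}, where Haefliger's theorem appears as "a foliation of a simply connected manifold with a closed transversal has a vanishing cycle". Simple connectivity of leaves rules out vanishing cycles, since a vanishing cycle is a nontrivial loop in a leaf. So the real content of my proof is the construction of the closed transversal $\gamma$ from the double intersection.
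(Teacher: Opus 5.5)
Your proposal is correct and matches what the paper relies on. The paper gives no proof of its own and simply cites Haefliger's classical argument, which is exactly your reduction: two intersection points of a leaf of $\mathcal F^{\perp}$ with a leaf of $\mathcal F$ give a closed transversal, which is null-homotopic in $\tilde M$ and therefore forces a leaf carrying a loop with nontrivial holonomy, hence nontrivial $\pi_1$. This is also how the paper uses the statement in the proof of Theorem~\ref{MainTheorem}.

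Two small remarks, neither of which is a gap:

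\begin{enumerate}
\item The orientation mismatch you worry about in the first step never happens. A codimension one foliation of the simply connected $\tilde M$ is transversely orientable, so the oriented leaf $\ell$ crosses every leaf of $\mathcal F$ in the same direction. Your fallback of using a first return or a different pair $(p,q)$ is therefore unnecessary.

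\item In your disk sketch, a regular closed leaf $c$ bounding a maximal center region already gives a contradiction, with no modification of $F$. Since $F$ is transverse to $\mathcal F$ near $c$, the holonomy of $F^*\mathcal F$ along $c$ is that of $\mathcal F$ along $F(c)$, which is trivial. Modifying $F$ is only needed to cancel a center against a saddle in the saddle-loop case. The final contradiction then comes from the Poincar\'e--Hopf index count on $D^2$, not from Poincar\'e--Bendixson.
\end{enumerate}
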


\subsection{Proof of Theorem \ref{MainTheorem}}

Let $f:\mathbb T^{n+1}\to \mathbb T^{n+1}$ be a partially hyperbolic diffeomorphism with $\dim E^s=\dim E^c=1$. Now suppose that there is an $f$-invariant $cu$-foliation $\mathcal W^{cu}$ having a compact incompressible leaf $T$. Still by Theorem \ref{AndyCptCuSubmanifolds}, and incompressibility, there is $k\in \N_+$ such that $f^k(T)=T$ and $f^k(\mathbb T^{n+1}\setminus T)=\mathbb T^{n+1}\setminus T\simeq \mathbb T^n\times (0,1)$. 

Then define $\varphi:\mathbb T^n\times [0,1]\to \mathbb T^n\times [0,1]$ such that:
$$\varphi (x):=\begin{cases} x, \text{if } x\in \mathbb T^n\times (0,1)\\(u,1),\text{if }  x=(u,0),u\in \mathbb T^n\\ (u,0),\text{if } x=(u,1),u\in \mathbb{T}^n\end{cases}$$
Then take $\hat f:\mathbb T^n\times [0,1]\to \mathbb T^n\times [0,1]$ such that $\hat f/\varphi=f^{2k}$ ( Note that $\mathbb T^n\times [0,1]/\varphi=\mathbb T^{n+1}$ ). So both $T_0:=\mathbb T^n\times \{0\},T_1:=\mathbb T^n\times \{1\}$ are invariant under $\hat f$ and $\hat f $ is isotopic to $\hat N:=A\times \text{Id}$ where $A$ is an $n\times n$ hyperbolic integer matrix with exactly one-dimensional stable distribution ( by Remark \ref{OneDimensionalStable} ) and $\text{Id}$ is the identity map on $[0,1]$. Set the natural product space projection as $P: \mathbb T^n\times [0,1]\to \mathbb T^n$. 
Then on the corresponding fundamental groups, we have:
$$P_*\circ \hat N=A\circ P_*
$$
Hence by \cite{Franks1970}, there is a semi-conjugacy $h:\mathbb T^n\times [0,1]\to \mathbb T^n$ homotopic to $P$ such that $h\circ \hat f=A_T\circ h$, where $A_{T}:\mathbb T^n\to \mathbb T^n$ is the induced hyperbolic toral automorphism from $A$ (We have $\pi_{T}\circ A=A_{T}\circ \pi_{T}$, where $\pi_T:\mathbb R^n\to \mathbb T^n$ is the natural covering map). Define $k:=h|_{\mathbb T^n\times \{0\}}$, and let $\tilde k:\mathbb R^n\to \mathbb R^n$ be a lift of $k$ and $\tilde{h}:\mathbb{R}^n\times [0,1]\to \mathbb{R}^n\times \{0\}$ be a lift of $h$ such that $\tilde{k}=\tilde h|_{\mathbb{R}^n\times \{0\}}$. 
The lemma below directly follows from the fact that $\tilde h$ has a bounded distance from $P$:  
\begin{lemma}
    \label{hmapsStoS}

$h$ maps stable manifolds of $\hat f$ to stable manifolds of $A_T$.
\end{lemma}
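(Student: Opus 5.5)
The plan is to work in the universal cover $\mathbb R^n\times[0,1]$ and use only two facts: $\tilde h$ stays a bounded distance from the projection $P$, and $\tilde h$ intertwines a lift of $\hat f$ with $A$. Fix a lift $\tilde f$ of $\hat f$ to $\mathbb R^n\times[0,1]$ with linear part $\hat N=A\times\mathrm{Id}$. Then $\tilde h\circ\tilde f=A\circ\tilde h$, and there is $K>0$ with $\|\tilde h(\tilde x)-P(\tilde x)\|\le K$ for all $\tilde x$. This bound holds because $h$ is homotopic to $P$ on the compact space $\mathbb T^n\times[0,1]$, so the lifted homotopy moves points a bounded amount. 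I will also use that $\tilde f$ is at bounded distance from $\hat N$ (again by compactness), so that $P\circ\tilde f$ is at bounded distance from $A\circ P$.

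Take $\tilde x,\tilde y$ on the same stable manifold of $\tilde f$; this lifts a pair on a stable leaf of $\hat f$. By definition of the stable foliation of $\hat f$ (the one-dimensional leaves tangent to $E^s$ of $f^{2k}$, cut along $T$, which is $cu$), we have $d(\tilde f^m\tilde x,\tilde f^m\tilde y)\to0$ as $m\to+\infty$. In particular the forward iterates stay at uniformly bounded distance, say $\le C$, since the stable segment joining them is contracted. Then, for all $m\ge0$,
\begin{align*}
\|A^m(\tilde h\tilde x)-A^m(\tilde h\tilde y)\| &= \|\tilde h(\tilde f^m\tilde x)-\tilde h(\tilde f^m\tilde y)\| \\
&\le 2K+\|P\tilde f^m\tilde x-P\tilde f^m\tilde y\|\le 2K+C.
\end{align*}
So the vector $v=\tilde h\tilde x-\tilde h\tilde y$ has bounded forward orbit under the hyperbolic linear map $A$. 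Decomposing $v$ into its stable and unstable components, any nonzero unstable component would grow exponentially. Hence $v\in E^s_A$, which means $\tilde h\tilde y$ lies on the stable leaf of $A$ through $\tilde h\tilde x$. Projecting by $\pi_T$ gives the statement for $h$ and $A_T$.

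The only point needing care is the uniform bound $C$ on forward iterates of a stable pair. This is immediate for pairs on the same local stable manifold. For pairs far apart on a global stable leaf, I would apply the local argument to $\tilde f^{j}\tilde x,\tilde f^{j}\tilde y$ for some $j$ large enough that they lie in a local stable manifold, and then use the identity $A^{j}(\tilde h\tilde x-\tilde h\tilde y)=\tilde h\tilde f^j\tilde x-\tilde h\tilde f^j\tilde y\in E^s_A$. Since $E^s_A$ is $A$-invariant, this forces $v\in E^s_A$. The remaining detail is that the boundary tori $T_0,T_1$ cause no trouble: stable leaves of $\hat f$ are transverse to them and, being contracted, stay inside $\mathbb T^n\times[0,1]$ under forward iteration because the boundary is invariant. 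So the argument goes through uniformly.
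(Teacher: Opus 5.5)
Your proof is correct and is essentially the paper's argument. The paper only asserts that the lemma follows from $\tilde h$ being at bounded distance from $P$; you supply the omitted details: the semiconjugacy together with contraction along stable arcs gives $\tilde h\tilde x-\tilde h\tilde y$ a bounded forward $A$-orbit, which forces it into $E^s_A$.

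Two small remarks. Your extra step for far-apart pairs is unnecessary, since the bound $C$ only has to hold for each fixed pair. Also, the estimate $\|P\tilde f^m\tilde x-P\tilde f^m\tilde y\|\le C$ should carry a constant coming from the coarse Lipschitz property of the lift of $P$; alternatively, uniform continuity of $\tilde h$ gives $A^m(\tilde h\tilde x-\tilde h\tilde y)\to 0$ directly.
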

Now take a lift of $\hat f$ as $\hat F$ and set $\pi_{T_0}:\mathbb{R}^n\times \{0\}\to T_0$ as the universal covering map. Like the map $H$ in Section \ref{CodimensionOnePHD}, we have $H_0$ such that $H_0\circ {\hat F}|_{\mathbb{R}^n\times \{0\}}=A\circ H_0$ and the corresponding $h_0:T_0\to T_0$ such that $h_0\circ \pi_{T_0}=\pi_{T_0}\circ H_0$. Also define $L:\mathbb{R}^{n}\times \{0\}\to \mathbb{R}^n$ such that $\forall t\in \mathbb{R}^n,L(t,0)=t$ and the map $l:T_0\to \mathbb{T}^n$ such that $\forall u\in \mathbb{T}^n, l(u,0)=u$. Denote $h_l:=l^{-1}\circ k$ and $H_L:=L^{-1}\circ {\tilde{k}}$. Since $\pi_{T_0}\circ H_L=h_l\circ \pi_{T_0}$, $H_L$ is a lift of $h_l$. 

Since $h_l\circ \hat f|_{T_0}=A_{T_0}\circ h_l$, and the fact that $A_{T_0}$ is a $\pi_1$-diffeomorphism ( see \cite{Franks1970} ), we have $h_l=h_0$. Hence $k=l\circ h_0$. 

The following proposition reveals the relation between $h^{-1}(h(p))$ and $\mathcal W^{cs}_{\text{loc}}(p)$ for points in $\mathbb T^n\times (0,1)$ that are close enough to $T_0$.
\begin{proposition}
    \label{fiberclosetoT0}

$\forall \epsilon>0$ there is a periodic point $p\in T_0$ such that the arc $\gamma _p:={k}^{-1}(k(p))$ has length smaller than $\epsilon$. Let $U$ be a small neighborhood of $\gamma _p$, then $U\cap h^{-1}(h(p))\subset {\mathcal W}^{cs}_{\text{loc}}(p)$. Also, let $\gamma$  be an open center arc strictly containing $\gamma _p:={k}^{-1}(k(p))$. Then for any center arc $\gamma '\subset \mathcal W^{cs}_{\text{loc}}(p)$ close enough to $\gamma$ , we have that $h^{-1}(h(p))\cap \gamma '\ne \emptyset$.
\begin{proof}
    Note that $\forall x\in T_0, {k}^{-1}(k(x))=h_0^{-1}\circ l^{-1}\circ l\circ h_0(x)=h_0^{-1}\circ h_0(x)$, such an arc $\gamma _p$ exists by Proposition \ref{periodicpointwithsmallfiber}. 

Now consider $y\in U\cap h^{-1}(h(p))$. Then set $z$ as the intersection between the local stable manifold of $y$ and $T_0$. Let $\gamma ^s$ be the stable arc connecting $y$ and $z$. Then by Lemma \ref{hmapsStoS}, for $U$ small enough, $h(\gamma ^s)\subset {\mathcal W}^{s}_{\text{loc}}(h(p))$, thus $z\in  k^{-1}(h(\gamma^s))\subset k^{-1}({\mathcal W}^{s}_{\text{loc}}(h(p))) =h_0^{-1}\circ l^{-1}( \mathcal W^s_{\text{loc}}(h(p)))\subset h_0^{-1}(\mathcal{W}^{s}_{\text{loc}}(l^{-1}\circ h(p)))=h_0^{-1}( \mathcal{W}^{s}_{\text{loc}}(h_0(p))\subset  \mathcal W^c(p)$ by Lemma \ref{fibersareCarcs}.  Hence $y\in {\mathcal W}^{cs}_{\text{loc}}(p)$ once $\epsilon>0$ is small enough. 

Also, for the center arc $\gamma '\subset \mathcal W^{cs}_{\text{loc}}(p)$ close enough to $\gamma$  and then for any $y\in \gamma ',$ there is a corresponding $\hat y\in T_0$ which is the intersection between the local stable manifold of $y$ and $T_0$. Obviously, $\hat y\in \gamma$  so we have a center subarc $\gamma ^c\subset \gamma$  joining $\hat y$ and $p$. Then $h(y)\in \mathcal W^s(h(p))$ by Lemma \ref{hgmapsCtoS} and \ref{hmapsStoS} . Hence $h(\gamma ')$ lies in $\mathcal W^s(h(p))$. Note that $h(\gamma )$ is a stable arc containing $h(p)$ in its interior and $h(\gamma )$ is contained in  $\mathcal W^s(h(p))$, which is a line. As a result, by the continuity of $h$, $h(p)\in h(\gamma' )$. So there is $x\in \gamma'$, such that $h(x)=h(p)$, hence $x\in h^{-1}(h(p))$. This implies that $h^{-1}(h(p))\cap \gamma '\ne \emptyset$.
\end{proof}
\end{proposition}
Now we can finish the proof. 
   Let $C>0$ be a constant such that $\text{diam}(\tilde h^{-1}(x))<C,\forall x\in T$ and $\epsilon >0$ small enough such that any two points having distance less than $\epsilon$ lie in a trivializing chart of $\tilde {\mathcal W}^{cu}$. Then there is a large $N\in \mathbb N_+$ such that any set with diameter not larger than $C$ has no more than $N-1$ $\epsilon$-separated points. Pick $p\in T_0$ as in Proposition \ref{fiberclosetoT0}. We might as well choose an appropriate lift $\hat F$ of $\hat{f}$ and a lift $\tilde p$ of $p$ such that $\tilde p$  is $\hat{F}$-periodic. Denote the period of $\tilde p$ under $\hat F$ as $m\in \N_+$. 

It is not difficult to check that properties of $p$ in Proposition \ref{fiberclosetoT0} still hold for $\tilde p$ in the universal cover since it is a local property. So we can take $x_1,...,x_N\in {\tilde {\mathcal W}}^{cs}_{\text{loc}}(\tilde p)\cap {\tilde h}^{-1}(\tilde h(\tilde p))$ such that each of them are in different center curves of $\hat F$. Then ${\hat F}^n({\tilde h}^{-1}(\tilde h(\tilde p)))={\tilde h}^{-1}(\tilde h({\hat F}^n(\tilde p))),\forall n\in \mathbb Z$, we have that $\forall n\in \Z,\text{diam}(\{{\hat F}^n(x_j)\}_{1\le j\le N})<C$. So we can find two points $x_i,x_j$ and a decreasing subsequence $n_k\to -\infty$, such that $m\mid n_k$ and $d({\hat F}^{n_k}(x_i),{\hat F}^{n_k}(x_j))<\epsilon,\forall k\in \N_+$.  Then there are two arcs, $\alpha _1,\alpha _2$ joining $x_i$ with $x_j$, where $\alpha _1$ starts at $x_i$, tangent to $E^c$ and $\alpha _2$ starts at the other endpoint of $\alpha _2$ and ends at $x_j$, tangent to $E^s$. Then since ${\hat F}^{n_k}(\alpha _2)$ will be very long as $n_k\to -\infty$ and ${\hat F}^{n_k}(\alpha _1)$ is contained in a leaf of $\tilde {\mathcal W}^{cu}$. Hence by taking a small perturbation, we have a closed curve $\tilde \gamma \subset \mathbb{R}^n\times [0,1]$ transverse to $\tilde {\mathcal W}^{cu}$. Then a slight modification of Theorem \ref{Haefliger} implies that there is a non-simply connected leaf in $\tilde {\mathcal W}^{cu}$.

\newpage
\section{On Incompressibility of Anosov Tori}
\label{OnIncomAnoTori}
 \begin{proposition}
Let $N$ be a 3-manifold and $\mathbb{S}^2 \neq S \subset N$ is an embedded 2-sided closed surface then the following three statements are equivalent
\begin{enumerate}[label=(\alph*)]
    \item $S$ is incompressible.
    \item There is no embedded disc $D \subset N$ such that $D \cap S = \partial D$ and $\partial D$ is essential in $S$.
    \item For each disc $D \subset N$ with $D \cap S = \partial D$, there is a disc $D' \subset S$ with $\partial D = \partial D'$.
\end{enumerate}
\end{proposition}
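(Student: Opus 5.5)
The plan is to prove the cycle (a)$\Rightarrow$(b)$\Rightarrow$(c)$\Rightarrow$(a). The first two implications are elementary. The third is the Loop Theorem (Papakyriakopoulos, in Stallings' form), and it is the only deep input.

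For (a)$\Rightarrow$(b), suppose $D\subset N$ is an embedded disc with $D\cap S=\partial D$. Then $\partial D$ bounds a disc in $N$, so its class lies in $\ker(i_*:\pi_1(S)\to\pi_1(N))$. Incompressibility forces this class to be trivial in $\pi_1(S)$. A simple closed curve on a surface that is nullhomotopic bounds an embedded disc in $S$ (Epstein's theorem), so $\partial D$ is inessential. This contradicts (b)'s hypothesis that $\partial D$ is essential, which proves (b).

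For (b)$\Rightarrow$(c), take a disc $D$ with $D\cap S=\partial D$. By (b), $\partial D$ is not essential in $S$, so it bounds a disc $D'\subset S$ with $\partial D'=\partial D$. This is exactly (c). (If one reads "essential" as "homotopically nontrivial," one again uses Epstein's result to turn a nullhomotopic simple curve into the boundary of an embedded disc.)

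The main step is (c)$\Rightarrow$(a). Suppose $i_*$ is not injective. Cut $N$ along $S$, which is possible because $S$ is two-sided, to get a 3-manifold $N_S$ whose boundary contains two copies $S_\pm$ of $S$. A nontrivial element of $\ker i_*$ is represented by a loop in $S$ bounding a singular disc in $N$. Make this singular disc transverse to $S$ and study the preimage of $S$, a finite collection of circles in the domain disc. Take an innermost circle whose image is essential in $S$. If every innermost circle has inessential image, redefine the map on the corresponding subdisc so that it maps into $S$, push it off $S$, and thereby reduce the number of circles. This produces a singular disc lying in $N_S$ whose boundary is an essential loop in some copy of $S$ in $\partial N_S$. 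Hence $\ker(\pi_1(S_\pm)\to\pi_1(N_S))\neq 1$ for one of the copies.

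The Loop Theorem then gives an embedded disc $D\subset N_S$ with $D\cap\partial N_S=\partial D$ and $\partial D$ essential in $S_\pm$. Regluing, $D$ becomes an embedded disc in $N$ with $D\cap S=\partial D$ essential in $S$. By (c), $\partial D$ bounds a disc $D'\subset S$, so it is inessential, which is a contradiction.

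The obstacle I anticipate is the bookkeeping in the innermost-circle reduction, namely verifying that the complexity genuinely decreases. The hypothesis $S\neq\mathbb S^2$ enters only to exclude the degenerate sphere case, where "essential" and "bounds a disc" behave differently (e.g.\ a sphere is always $\pi_1$-injective but can be compressible in the sense of bounding a ball). I would cite Hempel's \emph{3-Manifolds}, Chapter 6, for the standard statement.
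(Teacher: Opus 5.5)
The paper states this proposition without proof, treating it as standard background (its remark points to Hatcher's notes, where (c) is the definition of incompressibility), so there is no in-paper argument to compare against. Your proof is correct and is the standard argument from Hatcher and Hempel, Chapter 6: Epstein's theorem handles (a)$\Rightarrow$(b)$\Rightarrow$(c), and (c)$\Rightarrow$(a) uses an innermost-circle reduction to a singular disc in the cut-open manifold, followed by the Loop Theorem.

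Two small points would tighten it.

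First, before making the singular disc transverse to $S$, homotope it so that a collar of $\partial D^2$ maps into one side of a product neighborhood $S\times[-1,1]$. Two-sidedness is used here, as well as in the push-off. This makes $f^{-1}(S)$ equal to $\partial D^2$ plus finitely many interior circles. The reduction is then a clean induction on the number of circles: at each stage, either some innermost circle has essential image, or all innermost circles can be removed.

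Second, your argument never uses $S\neq\mathbb S^2$, and that is consistent. With the paper's $\pi_1$-injective definition of (a), all three conditions hold trivially for a sphere.
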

\begin{remark}
    Actually, $c$ is the original definition of incompressibility of embedded 2-sided closed surfaces ( see, for instance, \cite{Hatcher2000} ).
\end{remark}
In \cite{RodriguezHertz2008}, the authors gave the proof on the incompressibility of Anosov 2-tori in 3-manifolds:
\begin{theorem} (Theorem 4.1 of \cite{RodriguezHertz2008})
   \label{Ano2ToriIncom}
Anosov 2-torus in an orientable closed 3-manifold are incompressible.
\end{theorem}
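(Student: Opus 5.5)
The plan is to argue by contradiction and obtain, from a compression of $T$, a primitive class in $\pi_1(T)\simeq\mathbb Z^2$ that is preserved up to sign by some power of $(g|_T)_*$. A hyperbolic matrix has irrational eigenvalues, so none of its powers can preserve a line in $\mathbb Z^2$.

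\textbf{Setup.} Let $M$ be orientable and closed, and let $g:M\to M$ be a diffeomorphism with $g(T)=T$ such that $(g|_T)_*$ is a hyperbolic matrix $A\in GL(2,\mathbb Z)$. Since $T$ is an orientable surface in an orientable $3$-manifold, it is two-sided. Suppose $T$ is compressible, so $i_*:\pi_1(T)\to\pi_1(M)$ has nontrivial kernel. By Dehn's lemma and the loop theorem there is an embedded disc $D\subset M$ with $D\cap T=\partial D$ and $\partial D$ essential in $T$. The proposition above, (a)$\Leftrightarrow$(b), is exactly this reformulation. An essential simple closed curve on $T$ determines a slope, meaning a primitive class $\pm c\in\mathbb Z^2$ up to sign. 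The interior of $D$ lies in one of the (at most two) local sides of $T$, i.e.\ in a component $X$ of $M$ cut along $T$, approached from a definite side of $T$.

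\textbf{Dynamics of compressing discs.} Since $g$ is a diffeomorphism preserving $T$, it sends compressing discs to compressing discs, and $g^2$ preserves each side of $T$. Then $g^2(D)$ is a compressing disc on the same side as $D$, with slope $A^2(\pm c)$.

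\textbf{Key topological lemma.} All compressing discs of $T$ lying on a fixed side have the same slope. Granting this, $A^2c=\pm c$, so $c$ is a rational eigenvector of $A^2$. This contradicts hyperbolicity, since the eigenvalues of $A^2$ are irrational and of modulus $\neq1$. Hence $T$ is incompressible. This route uses only one disc, so there is no need to analyse whether $\ker i_*$ has rank one or two. (Note that a rank-two kernel genuinely occurs, e.g.\ for the unknotted torus in $\mathbb S^3$, where the two sides compress along different slopes. So the lemma must be stated per side.)

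\textbf{Proof of the lemma, the main obstacle.} Let $D,D'$ be compressing discs on the same side, with slopes of nonzero intersection number. Isotope them so that $\partial D$ and $\partial D'$ meet transversally and minimally in $T$, and $D\pitchfork D'$.
\begin{enumerate}[label=(\roman*)]
\item Remove closed circles of $D\cap D'$ by innermost-disc surgery on $D'$. This keeps $\partial D'$ unchanged and does not change its slope.
\item The remaining intersections are arcs. Take an arc $\alpha$ outermost in $D'$; it cuts off a subdisc $E\subset D'$ with $E\cap D=\alpha$.
\item The arc $\alpha$ splits $D$ into two half-discs. Each half-disc together with $E$ forms a new disc $D''$, disjoint from the old one after a small push. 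Its boundary meets $\partial D'$ in fewer points than $\partial D$ did.
\item Show that one of these new discs is still a compressing disc on the same side, with the slope of $\partial D$ or with the two slopes separated, so that the bigon count strictly drops.
\item Induct until the two discs are disjoint with different slopes. This is impossible, because disjoint essential simple closed curves on a torus are parallel.
\end{enumerate}
The delicate point is step (iv): ruling out that both new boundary curves are inessential. I would handle it by a homological count on $T$. The two new boundary curves are obtained by cutting $\partial D$ at the endpoints of $\alpha$ and joining each of the two resulting pieces to the subarc $\partial E\cap T$. Their classes therefore sum to $[\partial D]$, up to orientation. So they cannot both be null-homologous in $T$, and one of them is essential. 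Orientability of $M$ is used here, to guarantee that $E$ approaches $D$ from a consistent side.
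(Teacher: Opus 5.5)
Your top-level argument is correct. The loop theorem gives a compressing disc, and $g^2$ preserves each side of the two-sided torus $T$. So uniqueness of the compressing slope on a side would force $A^2c=\pm c$, which no hyperbolic matrix allows. The gap is in your proof of that key lemma, at steps (iv)--(v). The homological count shows only that one of the surgered curves $\delta_1\cup\beta$, $\delta_2\cup\beta$ is essential. For the induction to end with ``two disjoint discs with different slopes'', you need that the disc you keep has a slope different from that of $\partial D'$ at every stage. An essential curve whose class is merely one of two summands of $[\partial D]$ could a priori have any slope, including $\pm c'$. In that case the induction stops with no contradiction. Nothing in (iv) excludes this, and your alternative ``or with the two slopes separated'' is never established.

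The missing idea is a fact you mention but do not use. Because $D$ is two-sided (orientability of $M$ is not what is used here), $E$ lies on one side of $D$ along all of $\alpha$. Hence $\beta=\partial E\cap T$ leaves $\partial D$ into the same side of $\partial D\subset T$ at both endpoints. In the annulus obtained by cutting $T$ along $\partial D$, $\beta$ therefore has both ends on one boundary circle, so it cobounds a disc in $T$ with one of $\delta_1,\delta_2$. It follows that one surgered curve is inessential and the other is isotopic to $\partial D$. The surgery thus keeps the slope $c$ and lowers $|\partial D\cap\partial D'|$ by at least two, and the induction closes. In fact, since you start in minimal position, $\beta$ already exhibits a bigon, so $D\cap D'$ has no arcs and $\partial D\cap\partial D'=\emptyset$ at once.

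A shorter route is a sign count. Orient $M,D,D'$ and make the discs vertical near $T$. Then the two endpoints of every arc of $D\cap D'$ carry opposite local intersection signs in $T$. Hence $[\partial D]\cdot[\partial D']=0$, and the slopes agree; this is half-lives-half-dies for the side containing the discs.

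For comparison, the paper does not prove this statement. It only quotes it, describing the original argument as cutting along $T$ and applying Kneser's lemma, and your repaired argument is of that type. It differs from the paper's own higher-dimensional method in Section~\ref{OnIncomAnoTori}, which uses $A$-invariance of $\ker i_*$, torsion-freeness of $\pi_1(M)$, and duality in $\mathbb S^{n+1}$. Unlike that method, yours needs no hypothesis on $\pi_1(M)$ or on the universal cover.
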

The proof of Theorem \ref{Ano2ToriIncom} is a 3-manifold topology argument, the attempt to extend this proof to all dimensions fails at the beginning, which is, after splitting along the torus, at least one of the components does not have an incompressible boundary. This follows from Kneser's Lemma in 3-manifolds while in higher-dimensions there are few higher-dimensional analogues of the Loop Theorem along with its corollaries Kneser's Lemma and Dehn’s Lemma (See for instance, the 4-dimensional Disk Embedding Theorem and some 4-dimensioan analogues of Dehn’s Lemma ), and they are not as satisfactory as those results in 3-manifold topology. As a result, we have to find some methods more algebraic and add some restrictions on the ambient manifold of Anosov tori. 

Also, Anosov 2-tori and 3-tori are, algebraicly special, in the sense that the characteristic polynomials of $2\times 2$ and $3\times 3$ hyperbolic integer matrices are irreducible. However, in higher dimensions, we have hyperbolic matrices like
$$A_0=\begin{bmatrix}
2 & 1 & 0 & 0 \\
1 & 1 & 0 & 0 \\
0 & 0 & 2 & 1 \\
0 & 0 & 1 & 1
\end{bmatrix}$$
which is not irreducible. Now we start the proof of Theorem \ref{IrreAnosovTori}. 

\begin{proof}[Proof of Theorem \ref{IrreAnosovTori}]
    Since $T$ is an embedded irreducible Anosov $n$-tori of $M$, then there is a diffeomorphism $f:M\to M$ such that $f(T)=T$ and $(f|_T)_*:\pi_1(T)\to \pi_1(T)$ induces a hyperbolic integer matrix $A:\mathbb{R}^n\to \mathbb{R}^n$ with irreducible characteristic polynomial $P_A$. 
    Now suppose that $T$ is not incompressible.\\
 \\Claim 1:  $T$ is totally compressible. 
\begin{proof}[Proof of claim]
Denote the universal covering map of $T$ as $\pi_T:\mathbb R^n\to T$, and we will identify $\mathbb Z^n\subset \mathbb R^n$ with $\pi_1(T)$ in the following arguments. 

Suppose there is a nontrivial element $v\in \ker i_*$. By the fact $i\circ f|_T=f\circ i$, $\ker i_*$ is $A$-invariant. It is not difficult to see that $Av, v$ are non-parallel and are both in $\ker i_*$. If, as an induction hypothesis, there is $2\le k\le n-1$ such that $v,Av,...,A^{k-1}v$ are linearly independent whereas $v,Av,...,A^kv$ are not, there are constants $\lambda _j,0\le j\le k$ such that $A^{k}v=\Sigma _{j=0}^{k-1}\lambda _jA^jv$, then for any $w\in W_{k}:= \text{Span}_{\R}\{v,Av,...,A^{k-1}v\}$, let $w=\Sigma _{j=0}^{k-1}c _jA^jv$ for some constants $c _j,0\le j\le k$, we have that $A(w)=\Sigma _{j=0}^{k-1}c _jA^{j+1}v=c_{k-1}A^kv+\Sigma _{j=1}^{k-1}c _{j-1}A^{j}v=c_{k-1}\Sigma _{j=0}^{k-1}\lambda _jA^jv+\Sigma _{j=1}^{k-1}c _{j-1}A^{j}v\in W_k$. It follows that  $W_{k}$ is an $A$-invariant $k$-dimensional vector subspace of $\mathbb R^n$, spanned by $k$ integer-valued vectors. The main result of \cite{Bombieri1983} implies that one can take $v_k,...,v_{n-1}$ as $n-k$ integer-valued vectors in $\mathbb R^n$, spanning the subspace $V_{n-k}$, such that $\mathbb R^n=W_k\oplus V_{n-k}$. Now denote $w_j:=A^jv$, and the integer coeffiients $w_j=(w_{i,j})_{0\le i\le n-1},0\le j\le k-1,$ $v_j=(v_{i,j})_{0\le i\le n-1},k\le j\le n-1$. Define $Q:=(u_{i,j})_{0\le i,j\le n-1}$, where $u_j:=(u_{i,j})_{0\le i\le n-1}=\begin{cases}w_j,0\le j\le k-1\\v_j,k\le j\le n-1\end{cases}$. Then the matrix ( with rational coefficients ) $B:=Q^{-1}AQ$ has the following block structure: 

$$
B=\begin{bmatrix}
B_1 & B_2 \\
0 & B_3
\end{bmatrix}
$$

This implies that the characteristic polynomial $P_A(t)=P_{B}(t)=P_{B_1}(t)P_{B_2}(t)$ admits a non-trivial factrization, contradicting to the irreducibility of $A$. 

By the induction above, $v,Av,...,A^{n-1}v$ are linearly independent integer-valued vectors, then by taking integer multiplications and subtractions ( both of them are invariant operations for subgroups of $\mathbb Z^n$ ), we can find $m_0,...,m_{n-1}\in \Z$ such that $(m_0,0,\dots,0),(0,m_1,\dots,0),\dots,(0,0\dots m_{n-1})\in \ker i_*$. It follows that $\ker i_*=\pi_1(T)$ since $\pi_1(M)$ is torsion-free.
\end{proof}
Denote the universal covering map of $M$ as $\pi_M:\mathbb R^{n+1}\to M$, notice that $i_*(\pi_1(T))=0\le  \pi_M(\mathbb R^{n+1})$, by the lifting criterion we have a lifting $\tilde i$, which is an embedding since $\pi_M\circ \tilde i=i$. Set $\tilde T:=\tilde i\circ i^{-1}(T)$ and we can take a lift $\tilde f :\mathbb R^{n+1}\to\mathbb R^{n+1}$ of $f$, such that $\tilde f(\tilde T)=\tilde T$. By adding a point of infinity $\infty$ and identifying $\mathbb S^{n+1}$ with $\mathbb R^{n+1}\sqcup \{\infty\}$, we have that $\tilde T$ is embedded in $\mathbb S^{n+1}$.

By the tubular neighborhood theorem, we can split $\mathbb S^{n+1}$ by removing $U(\tilde T)$, a small tubular neighborhood of $\tilde T$  which is a manifold with boundary and is homotopy equivalent to $\tilde T$, denote $\hat  S^{n+1}:=\mathbb S^{n+1}-U(\tilde T)$.\\
\\Claim 2: There are exactly two components in $\hat  S^{n+1}$, denoted as $\hat  S^{n+1}_1$ and $\hat  S^{n+1}_2$. 
\begin{proof}[Proof of Claim]
    Actually, \begin{align*}
        H_0(\hat  S^{n+1}))=\tilde H_0(\hat  S^{n+1})\oplus \mathbb{Z}\xlongequal{\text{Alexander duality}}\tilde H^n(U(\tilde T))\oplus \mathbb Z\\\simeq H^n(\tilde T)\oplus \mathbb Z\xlongequal{\text{Poincaré duality}}H_0(\tilde{T})\oplus \mathbb Z=\mathbb Z^2
    \end{align*}, so the number of components of $\hat  S^{n+1}$ are exactly two. 
    
\end{proof}
It follows that $\partial \hat  S^{n+1}$ is a disjoint union of two copies of $\tilde T$, which are boundaries of the components, denoted as $\tilde T_r=\partial \hat   S^{n+1}_r$ with the inclusion map $i_r:\tilde T_r \hookrightarrow \hat  S^{n+1}_r,r=1,2$. Then there is a naturally induced $\hat f:\hat  S^{n+1}\to \hat  S^{n+1}$ such that: 

$$
\begin{cases}\hat f(\tilde T_r)=\tilde T_r,r=1,2\\ \hat f(\infty)=\infty \\\hat f|_{\tilde T_1},\hat  f|_{T_2},\tilde f|_{\tilde T} \text{ are topologically conjugate}\end{cases}
$$

Now denote $U:=\hat  S^{n+1}_1\cup U(\tilde T)$ and $V:=\hat  S^{n+1}_2\cup U(\tilde T)$, obviously they are open subsets of $\mathbb S^{n+1}$ such that $U\cap V=U(\tilde T)$ and $U\cup V=\mathbb S^{n+1}$.

 Denote $i_U :U(\tilde T) \hookrightarrow U$ and $i_V:U(\tilde T) \hookrightarrow  V$. Obviously $G_1:=\ker {i_U}_*\simeq \ker {i_1}_*$ and $G_2:=\ker {i_V}_*\simeq \ker {i_2}_*$.

Following from the same argument as claim 1 applied to $\hat f|_{\tilde T_1},\hat f|_{\tilde T_2}$ topologically conjugate to $\hat f|_{\tilde T}$, we have that $G_j$ is either $0$ or isomorphic to $\mathbb Z^n$, $j=1,2$. So there will be only four possible cases for the pair $(G_1, G_2)$.\\
\\ Claim 3 At least one of $G_1,G_2$ is isomorphic to $\mathbb Z^n$.

\begin{proof}[Proof of Claim]
    In light of Seifert–Van Kampen theorem, for $S:=\{({i_U}_*w )({i_V}_*w )^{-1}| w\in \pi_1(U(\tilde T))\}$ and let $\overline S$ be the normal closure of $C$ in $\pi_1(U)*\pi_1(V)$, we have:
    $$0=\pi_1(\mathbb S^{n+1})=\pi_1(U)*\pi_1(V)\mathbin{/}\overline S$$
    Note that:
    \begin{align*}
\overline{S} 
&= \left\{ \prod_{j=1}^k g_j^{-1} s_j^{\epsilon_j} g_j \ \middle| \ 
k \geq 0, \ \epsilon_j = \pm 1, \ s_j \in S, \ g_j \in \pi_1(U) * \pi_1(V) \right\} \\
&= \left\{ \prod_{j=1}^k 
\left[ \left( \prod_{l=1}^{n_j} g_{j,l} h_{j,l} \right)^{-1} s_j^{\epsilon_j} 
\left( \prod_{l=1}^{n_j} g_{j,l} h_{j,l} \right) \right] 
\ \middle| \ 
k \geq 0, \ \epsilon_j = \pm 1, \ s_j \in S, \right. \\
& \qquad \left. g_{j,l} \in \pi_1(U), \ h_{j,l} \in \pi_1(V), \ 1 \leq j \leq k, \ 1 \leq n_j, \ 1 \leq l \leq n_j \right\} \\
&= \left\{ \prod_{j=1}^k 
\left[ \left( \prod_{l=1}^{n_j} g_{j,l} h_{j,l} \right)^{-1} 
\left[ \left( ({i_U}_* w_j) ({i_V}_* w_j)^{-1} \right)^{\epsilon_j} \right] 
\left( \prod_{l=1}^{n_j} g_{j,l} h_{j,l} \right) \right] 
\ \middle| \right. \\
& \qquad \left. k \geq 0, \ \epsilon_j = \pm 1, \ w_j \in \pi_1(U(\tilde{T})), \ g_{j,l} \in \pi_1(U), \ h_{j,l} \in \pi_1(V), \right. \\
& \qquad \left. 1 \leq j \leq k, \ 1 \leq n_j, \ 1 \leq l \leq n_j \right\}.
\end{align*}
Since $\overline S=\pi_1(U)*\pi_1(V)$, take $u\in \pi_1(U)$ which is non-trivial, there is $k\ge 1,\epsilon_j=\pm 1,w_j\in \pi_1(U(\tilde T)), g_{j,l}\in \pi_1(U),h_{j,l}\in \pi_1(V),1\le j\le k,1\le n_j,1\le l\le n_j$ such that: 
  \begin{align*}
       u=  \prod_{j=1}^k 
\left[ \left( \prod_{l=1}^{n_j} g_{j,l} h_{j,l} \right)^{-1} 
\left[ \left( ({i_U}_* w_j) ({i_V}_* w_j)^{-1} \right)^{\epsilon_j} \right] 
\left( \prod_{l=1}^{n_j} g_{j,l} h_{j,l} \right) \right] 
\ 
    \end{align*}
    As we can see, if $\ker {i_U}_*=\ker {i_V}_*=\emptyset$, ${i_U}_*w_j$ and ${i_V}_*w_j$ are both non-trivial, and it can only be reduced by multiplying some $h_{j,l}\in \pi_1(V)$, while there must be another $h_{j,l}^{-1}$ at the other side since the long word consists of a concatenation of conjugations. This implies that at least one of ${i_U}_*$ and ${i_U}_*$ is not-injective.
\end{proof}
The next claim is on the other side:\\
\\Claim 4: $G_1,G_2$ cannot both be isomorphic to $\mathbb Z^n$. 
\begin{proof}[Proof of Claim]
    If so, the normal closure $\overline C$  is trivial hence both $U$ and $V$ are simply connected

Firstly, by Alexander duality and the universal coefficient theorem:
\begin{align*}
    H_{n-1}(\hat  S^{n+1}_1)\simeq \tilde H^1(V)\simeq H_1(V)\simeq\text{Ab}(\pi_1(V))\simeq\text{Ab}(\pi_1(\hat  S^{n+1}_2))=0\\H_n(\hat  S^{n+1}_1)\simeq\tilde H_n(\hat  S^{n+1}_1)\simeq \tilde H^0(V) =0
\end{align*}
Secondly, consider the following exact sequence with respect to the homology and relative homology groups of $\hat  S^{n+1}_1$:
$$0=H_n(\hat  S^{n+1}_1)\xrightarrow{}H_{n}(\hat  S^{n+1}_1,\tilde T_1)\xrightarrow{}H_{n-1}(\tilde T_1)\xrightarrow{}H_{n-1}(\hat  S^{n+1}_1)=0$$
Then by exactness of the sequence, we have:
$$H_{n}(\hat  S^{n+1}_1,\tilde T_1)\simeq H_{n-1}(\tilde T_1)\simeq \mathbb Z^n$$
However, by Lefschetz duality, we have the following contradiction:
$$n=\dim H_{n}(\hat  S^{n+1}_1,\tilde T_1)=\dim H^1(\hat S_1^{n+1})=\dim H_1(U)=\dim\text{Ab}(\pi_1(U))=0$$

\end{proof}
Now the only case left is, say, $G_1$ is isomorphic to $\mathbb Z^n$.  and $G_2$ is trivial.  

Let $H:={i_V}_*(\pi_1(U(\tilde T) ))$ we have  $H\le \pi_1(V)$ such that $H\simeq \pi_1(\hat T_1)$ and still in light of Seifert–Van Kampen theorem, we have $0=\pi_1(\mathbb S^4)=\pi_1(U)*\pi_1(V)\mathbin{/}\overline {H}$, where $\overline H$ is the normal closure of $H$ in $\pi_1(U)*\pi_1(V)$. 

It is well-known that $\pi_1(U)*\pi_1(V)\mathbin{/}\overline {\pi_1(V)}\simeq \pi_1(U)$, so we have an isomorphism $\phi:\pi_1(U)\to \pi_1(U)*\pi_1(V)\mathbin{/}\overline {\pi_1(V)}$.

By axiom of choice, there is a map $\psi:\pi_1(U)*\pi_1(V)\mathbin{/}\overline {\pi_1(V)}\to \pi_1(U)*\pi_1(V)$ such that $\forall J\in \pi_1(U)*\pi_1(V)\mathbin{/}\overline {\pi_1(V)},\psi(J)\in J$. Then since $\overline H\le \overline {\pi_1(V)}$, the map $\pi_{\overline H}\circ \psi\circ \phi:\pi_1(U)\to \pi_1(U)*\pi_1(V)\mathbin{/}\overline {H}$ is injective where $\pi_{\overline H}:\pi_1(U)*\pi_1(V)\to \pi_1(U)*\pi_1(V)\mathbin{/}\overline {H}$ is the natural quotient map,  hence $\text{Card}(\pi_1(U))\le \text{Card}(\pi_1(U)*\pi_1(V)\mathbin{/}\overline {H})=1$, so $\pi_1(U)$ is trivial. As a result, $\overline H\simeq \pi_1(V)$. 

On the other hand, in light of the Mayer–Vietoris sequence of the homology groups:
$$\cdot \cdot \cdot \xrightarrow{}H_1(U\cap V)\xrightarrow{} H_1(U)\oplus H_1(V)\xrightarrow{}H_1(\mathbb S^{n+1})\xrightarrow{}\cdot \cdot \cdot $$
We have that $0\simeq H_1(U)\oplus H_1(V)\mathbin{/}H$. Notice that $H_1(U)$ is trivial since $\pi_1(U)$ is, by Hurewicz theorem, we have $\pi_1(V)\mathbin{/}[\pi_1(V),\pi_1(V)]\simeq H_1(V)\simeq H$.\\
\\Claim 5: $[\pi_1(V),\pi_1(V)]\cap H=\{e_{\pi_1(V)}\}$
\begin{proof}[Proof of Claim]
    Let $\pi_V:\pi_1(V)\to \pi_1(V)\mathbin{/}[\pi_1(V),\pi_1(V)]$ be the natural quotient map, we now show that $\pi_V|_H$ is an isomorphism from $H$ to $\pi_1(V)\mathbin{/}[\pi_1(V),\pi_1(V)]$. 

$\forall g\in \pi_1(V),$  since $H$ normally generates $\pi_1(V)$, there is $g_1,...,g_m\in \pi_1(V)$ and $h_1,...,h_m\in H$ such that $g=g_1^{-1}h_1g_1\cdot g_2^{-1}h_2g_2...g_m^{-1}h_mg_m$. 

Notice that:
\begin{align*}
    g &\equiv g_1^{-1} h_1 g_1 \cdot g_2^{-1} h_2 g_2 \cdots g_m^{-1} h_m g_m \\
      &\equiv h_1 \cdot g_2^{-1} h_2 g_2 \cdots g_m^{-1} h_m g_m \\
      &\equiv h_1 \cdot h_2 \cdots g_m^{-1} h_m g_m \\
      &\equiv h_1 \cdot h_2 \cdots h_m \quad \big(\text{mod } [\pi_1(V), \pi_1(V)]\big)
\end{align*}
This implies that $\pi_V(g)=\pi_V( h_1\cdot h_2\cdot \cdot \cdot h_m   )\in \pi_V(H)$, hence $\pi_U|_H$ is surjective. Note that $H\simeq  \pi_1(V)\mathbin{/}[\pi_1(V),\pi_1(V)]$, which is free abelian of finite rank. This implies that $\pi_V|_H$ is an isomorphism. 

As a result, $\{e_{\pi_1(V)}\}=\ker\pi_V|_H=\ker \pi_V\cap H=[\pi_1(V),\pi_1(V)]\cap H$.
\end{proof}
Now let $\alpha$  be an action of $\pi_1(U(\tilde T) )$ on $[\pi_1(V),\pi_1(V)]$ such that: $$\forall v\in\pi_1(U(\tilde T) ),g\in [\pi_1(V),\pi_1(V)],\alpha _v(g):={i_V}_*(v)\cdot g\cdot {i_V}_*(v^{-1})$$, then we have a semidirect product $G_V:=[\pi_1(V),\pi_1(V)]\rtimes \pi_1(U(\tilde T)$  with respect to $\alpha$ . 

Set $\Psi: G_V\to \pi_1(V)$ such that $\forall g\in [\pi_1(V),\pi_1(V)]$ and $v\in\pi_1(U(\tilde T) )$ we have $\Psi (g,h):=g{i_V}_*(v)$. We need to show that $\Psi$  is an isomorphism. 

Note that for any $g_1,g_2\in [\pi_1(V),\pi_1(V)]$ and $v_1,v_2\in \pi_1(U(\tilde T) )$ we have that:
\begin{align*}
    \Psi(g_1,h_1) \cdot \Psi(g_2,h_2) 
    &= g_1 \cdot {i_V}_*(v_1) \cdot g_2 \cdot {i_V}_*(v_2) \\
    &= g_1 \cdot {i_V}_*(v_1) \cdot g_2 \cdot {i_V}_*(v_1^{-1}) \cdot {i_V}_*(v_1) \cdot {i_V}_*(v_2) \\
    &= g_1 \alpha_{h_1}(g_2) \cdot {i_V}_*(v_1) {i_V}_*(v_2) \\
    &= \Psi(g_1 \alpha_{h_1}(g_2), {i_V}_*(v_1) {i_V}_*(v_2)) \\
    &= \Psi \big((g_1, {i_V}_*(v_1)) (g_2, {i_V}_*(v_2))\big)
\end{align*}
So $\Psi $ is a homomorphism. Also,  if $\Psi (g,v)=1$ we have that $g={i_V}_*(v^{-1})\in H$. As a result $g\in[\pi_1(V),\pi_1(V)]\cap H$ therefore $g=e_{\pi_1(V)}$ by claim 5.

thus $\Psi$  is injective and the surjectivity of $\Psi$  follows from the surjectivity of $\pi_V|_H$ shown in the proof of claim 5. Hence $\Psi$  is an isomorphism and $\pi_1(V)\simeq G_V$.

Finally, since $H$ normally generates $\pi_1(V)$, $\pi_1(U(\tilde T) )=\Psi ^{-1}(H)$ normally generates $G_V$.

\end{proof}

\begin{remark}
    It seems that arguments in the proof of claim 3 can directly imply both $G_1$ and $G_2$ are isomorphic to $\mathbb Z^n$, but it is not true. For example, take a non-trivial knot $K\subset \mathbb S^3$ and set $U_K\simeq \mathbb D^2\times \mathbb S^1$ as its tubular neighborhood. Denote $G_K:=\pi_1(\mathbb{S}^3-U_K)$ as the knot group and $T_K:=\partial U_K\simeq \mathbb T^2$ is the boundary torus. 

Set $i_K:T_K\hookrightarrow \mathbb{S}^3-U_K,j_K:T_K\hookrightarrow \overline{U_K}$ as the inclusion maps towards the two directions. Then actually  $\ker {i_K}_*=0$ hence $\mathbb{S}^3-U_K$ has an incompressible torus boundary. Set embedded circles,$\gamma _l,\gamma _m$ of the classes as two generators of $\pi_1(T_K)$ respectively, such that $i_K(\gamma _l)$ is of the longitude of $G_K$  and an isotopy of $\gamma _m$  is of the meridian of $G_K$. 

On the other hand, Seifert–Van Kampen Theorem gives $\overline {C_K}=\pi_1(\overline {U_K})*\pi_1(\mathbb{S}^3-U_K)$ where $\overline {C_K}$ is the normal closure of $C_K:=\{({i_K}_*w)({j_K}_*w)^{-1}|w\in \pi_1(T_K)\}$ in $\pi_1(\overline {U_K})*\pi_1(\mathbb{S}^3-U_K)$. Then we must have $\pi_1(\overline {U_K}),G_K\le \overline {C_K}$, so we need to find elements in $\overline {C_K}$ to represent ${j_K}_*([\gamma ]_l)$

Since a meridian is a typical weight element of $G_K$ , ${i_K}_*[\gamma _m]$ normally generates $G_K$, thus we have $g_1,..,g_k\in G_K,\epsilon_1,...,\epsilon_k=\pm1,$   such that:

$$
{i_K}_*([\gamma _l])= \prod_{j=1}^k g_j^{-1}({i_K}_*[\gamma _m])^{\epsilon_j}g_j
$$

Then we have 

$$({j_K}_*[\gamma _l])=({j_K}_*[\gamma _l])({i_K}_*[\gamma _l])^{-1}({i_K}_*[\gamma _l])=({j_K}_*[\gamma _l])({i_K}_*[\gamma _l])^{-1} \prod_{j=1}^k g_j^{-1}({i_K}_*[\gamma _m])^{\epsilon_j}g_j\in \overline {C_K}$$.
\end{remark}

\end{document}